\newenvironment{enumeratei}{\begin{enumerate}[\upshape (i)]}{\end{enumerate}}
\numberwithin{equation}{section}
\theoremstyle{plain}
 \newtheorem{theorem}{Theorem}[section]
 \newtheorem{lemma}[theorem]{Lemma}
 \newtheorem{corollary}[theorem]{Corollary}
 \newtheorem{observation}[theorem]{Observation}
\theoremstyle{definition}
 \newtheorem{definition}[theorem]{Definition}
 \newtheorem{remark}[theorem]{Remark}
\newcommand\bomegakern{\kern 2pt}
\newcommand\whot [1] {#1^\ast}
\newcommand\url [1] {{\texttt{#1}}}
\newcommand \datum {\hfill 04:50,  November 17, 2017}
\renewcommand\rho{\varrho}
\newcommand \Con  {\textup{Con}}
\newcommand \Princ  {\textup{Princ}}
\newcommand \Aut  {\textup{Aut}}
\newcommand \Sub  {\textup{Sub}}
\newcommand \Id  {\textup{Id}}
\newcommand \Filt  {\textup{Filt}}
\newcommand \depth  {\textup{depth}}
\newcommand \fvar [1] {\mathcal V_{#1}}
\newcommand \modvar [1] {{\pfie #1\textup{-\tbf{Mod}}}}
\newcommand \pfie [1] {Q_{#1}}
\newcommand \vsp [2] {V_{#1}(#2)}
\newcommand \nnn {\mathbb N_0}
\newcommand \nnp {\mathbb N^*}
\newcommand \clat [2] {\textup{Sub}(\vsp{#1}{#2})}
\newcommand \fla[2] {L_{#1}(#2)} 
\newcommand \ula[2] {L^{\kern-1pt\sst{+}}_{#1}(#2)} 
\newcommand \sst [1] {{\scriptscriptstyle{#1}}}
\newcommand \nablaell[1] {{\nabla_{\kern-3pt #1}}}
\newcommand \deltaell[1] {{\Delta_{\kern-1pt #1}}}
\newcommand \Kfilter {\filter_{\kern-1pt K}}
\newcommand \restrict[2] {{#1\kern0.5pt\rceil_{\kern-1.5pt #2}}}
\newcommand\ideal[1]{\mathord\downarrow #1}
\newcommand\filter[1]{\mathord\uparrow #1}
\newcommand \tuple [1] {\langle #1\rangle}
\newcommand \pair [2] {\tuple{#1,#2}}
\newcommand \tbf [1] {\textbf{#1}} 
\newcommand \set[1] {\{#1\}}
\newcommand \red [1] {\color{red}#1\color{black}}
\renewcommand\phi{\varphi}
\renewcommand\epsilon{{\boldsymbol\varepsilon}}
\newcommand\semmi [1] {}
\begin{document}
\title
[Congruences of a lattice with more ideals than filters]
{On principal congruences and the number of  congruences of a lattice with more ideals than filters}

\author[G.\ Cz\'edli]{{G\'abor Cz\'edli}}
\email{czedli@math.u-szeged.hu}
\urladdr{http://www.math.u-szeged.hu/~czedli/}
\address{Bolyai Institute, University of Szeged, Hungary 6720}

\author[C.\ Mure\c san]{Claudia Mure\c san}
\email{c.muresan@yahoo.com, cmuresan@fmi.unibuc.ro}
\address{Faculty of Mathematics and Computer Science of the University of Bucharest,  RO 010014,  Romania;  and also:  Universit\`a degli Studi di Cagliari }

\thanks{This research was supported by
NFSR of Hungary (OTKA), grant number K 115518, and by  the research grant Propriet\`a d'Ordine Nella Semantica Algebrica delle Logiche Non–
classiche of Universit\`a degli Studi di Cagliari, Regione Autonoma della Sardegna, L. R. 7/2007, n. 7, 2015, CUP: F72F16002920002.}

\dedicatory{Dedicated to the memory of George Allen Hutchinson}

\begin{abstract} 
Let $\lambda$ and $\kappa$ be cardinal numbers such that $\kappa$ is infinite and either $2\leq \lambda\leq \kappa$, or $\lambda=2^\kappa$. 
We prove that there exists a lattice $L$ with exactly $\lambda$ many congruences, $2^\kappa$ many ideals, but only  $\kappa$ many filters. Furthermore, if $\lambda\geq 2$ is an integer of the form $2^m\cdot 3^n$, then 
we can choose $L$ to be a  modular 
lattice generating one of the  minimal modular nondistributive congruence varieties described by Ralph Freese in 1976, and this $L$ is even relatively complemented for $\lambda=2$. Related to some earlier results of 
George Gr\"atzer and the first author, we also prove that if $P$ is a bounded ordered set (in other words, a bounded poset) with at least two elements, $G$ is a group, and $\kappa$ is an infinite cardinal such that $\kappa\geq |P|$ and $\kappa\geq |G|$, then there exists a lattice $L$ of cardinality $\kappa$ such that (i) the principal congruences of $L$ form an ordered set isomorphic to $P$, (ii) the automorphism group of $L$ is isomorphic to $G$, (iii) $L$ has $2^\kappa$ many ideals, but (iv) $L$ has only $\kappa$ many filters.   
\end{abstract}

\subjclass {06B10 {{\color{red}
\datum{}\color{black}}}}
\keywords{Lattice ideal, lattice filter, simple lattice, more ideals than filters,  number of ideals,  cardinality, lattice congruence, principal congruence}

\maketitle
\section{Introduction and result}
For a lattice $L$, let $\Con(L)$, $\Filt(L)$, 
and $\Id(L)$ stand for the lattice of congruences, that of filters, and that of ideals of $L$, respectively. 
Motivated by Mure\c san~\cite{muresan2017arXiv}, we say that a triple $\tuple{\alpha,\beta,\gamma}$ of cardinal numbers is \emph{CFI-represented by a lattice} $L$ if $\tuple{\alpha,\beta,\gamma}= \tuple{\,|\Con(L)|,|\Filt(L)|,|\Id(L)|\,}$. Also, we say that  $\tuple{\alpha,\beta,\gamma}$ is an \emph{eligible triple} (of cardinal numbers) if 
there exists an infinite cardinal number $\delta$ such that $2\leq \alpha\leq 2^\delta$ and $\delta\leq \beta<\gamma\leq 2^\delta$. 
A lattice $L$ always has $\delta:=|L|$ many principal filters and principal ideals, and only those filters and ideals if $L$ is finite. Hence,  cardinal arithmetics trivially implies that 
\begin{equation}
\parbox{9cm}{if a triple  is CFI-represented by such a nonsingleton lattice that has \emph{more ideals than filters}, then this triple is eligible.}
\end{equation}
This raises the question whether every eligible triple is CFI-representable. Although we cannot answer this question in full generality, it follows trivially from our first theorem
to be stated soon that the answer is affirmative under the generalized continuum hypothesis. We are also interested in whether eligible triples can be represented by   ``nice'' lattices but we can show some  ``beauty'' of these lattices 
 only for certain eligible triples.
Postponing the  definition of $\fvar p$ until Definition~\ref{deffvar},  note that a lattice $L\in \fvar p$ \emph{generates} $\fvar p$ iff $L$ satisfies exactly the same lattice identities that are satisfied by all members of $\fvar p$.

\begin{theorem}\label{thmmain}
For every infinite cardinal number $\kappa$,
the following three statements hold.
\begin{enumeratei}
\item\label{thmmaina} If $2\leq \lambda\leq \kappa$, then the triple $\tuple{\lambda,\kappa, 2^\kappa}$ is CFI-representable.
\item\label{thmmainb} The triple $\tuple{2^\kappa,\kappa, 2^\kappa}$ can be CFI-represented by a \emph{distributive} lattice.
\item\label{thmmainc} Let $\fvar p$ be one of the minimal modular nondistributive congruence varieties described by Freese~\cite{freeseabstr}; see Definition~\ref{deffvar} later. If $m$ and $n$ are nonnegative integers with $m+n\geq 1$, then the triple $\tuple{2^m\cdot 3^n ,\kappa, 2^\kappa}$ can be CFI-represented by a \emph{modular} lattice $L=L(p,2^m\cdot 3^n ,\kappa)\in \fvar p$ such that $L$  generates $\fvar p$. For  $\pair m n=\pair 1 0$, this lattice  is relatively complemented.
\end{enumeratei}
\end{theorem}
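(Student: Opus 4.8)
The plan is to handle the three parts by one template: for the prescribed number of congruences $\mu$ — where $\mu=\lambda$ in (\ref{thmmaina}), $\mu=2^\kappa$ in (\ref{thmmainb}), and $\mu=2^m\cdot 3^n$ in (\ref{thmmainc}) — I would build a lattice $L$ of cardinality $\kappa$ with $|\Con(L)|=\mu$, $|\Filt(L)|=\kappa$ and $|\Id(L)|=2^\kappa$, lying in the variety demanded by that part. Since $|L|=\kappa$ there are only $\kappa$ principal ideals and $\kappa$ principal filters, so all the asymmetry must sit in the \emph{non-principal} ideals, and the whole game is to make the lattice richly branching downward while remaining well-founded upward. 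The model is an \emph{ideal-rich, filter-poor gadget} $W$ in which (a) every filter has a least element, hence is principal and $|\Filt(W)|=|W|=\kappa$, while (b) $2^\kappa$ distinct downsets are closed under finite join, so $|\Id(W)|=2^\kappa$. The distributive prototype is $W_0$, the lattice of finite subsets of $\kappa$ with a top adjoined: a least-cardinality argument on the finite members of a filter shows every filter is principal, and $S\mapsto\{F: F\subseteq S,\ F\text{ finite}\}$ embeds the $2^\kappa$ subsets of $\kappa$ into $\Id(W_0)$.

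Part (\ref{thmmainb}) is then essentially $W_0$ itself. After the two cardinal counts above, I would compute $|\Con(W_0)|=2^\kappa$ by checking that the coordinate collapses $\theta_\alpha$ (collapsing $[\,\emptyset,\{\alpha\}\,]$) are independent join-irreducible congruences, so that $S\mapsto\bigvee_{\alpha\in S}\theta_\alpha$ injects $2^\kappa$ into $\Con(W_0)$; together with $|\Con(W_0)|\le 2^{|W_0|}=2^\kappa$ this gives equality. As $W_0$ is distributive, this CFI-represents $\tuple{2^\kappa,\kappa,2^\kappa}$.

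For part (\ref{thmmainc}) I would replace the distributive gadget by a module-theoretic one, so that modularity and membership in $\fvar p$ are automatic. The key block is the lattice $W$ of all \emph{finite-dimensional} subspaces of a $\kappa$-dimensional $\pfie p$-vector space $V$: it is a sublattice of $\Sub(V)$, hence modular and in $\fvar p$; every interval $[A,B]$ is isomorphic to $\Sub(B/A)$ and so complemented, making $W$ relatively complemented; the minimal-dimension argument shows every filter is principal, giving $|\Filt(W)|=\kappa$; and $U\mapsto\{A\in W: A\subseteq U\}$ embeds the $2^\kappa$ subspaces of $V$ into $\Id(W)$, giving $|\Id(W)|=2^\kappa$. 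Since all prime quotients of $W$ are projective through a common finite-dimensional subspace, $W$ is simple, which settles $\pair m n=\pair 1 0$ (where $\lambda=2$). For general $2^m\cdot 3^n$ I would assemble $L$ from finitely many subdirectly irreducible modular blocks of $\fvar p$ — $m$ with congruence lattice $\mathbf 2$ and $n$ with congruence lattice $\mathbf 3$, at least one of them infinite and ideal-rich, filter-poor — glued along shared bounds; gluing at single points makes congruence lattices multiply, so $\Con(L)\cong\mathbf 2^{m}\times\mathbf 3^{n}$ and $|\Con(L)|=2^m\cdot 3^n$. Generation of $\fvar p$ is then cheap: as $\fvar p$ is a \emph{minimal} modular nondistributive congruence variety, each of its proper subvarieties is distributive, so any nondistributive member generates $\fvar p$; and $L$ is nondistributive because its blocks contain copies of $M_{p+1}\supseteq M_3$.

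Part (\ref{thmmaina}) is the most flexible, since no variety is prescribed and $\lambda\le\kappa$. I would again glue blocks at shared points, one of which is a simple ideal-rich, filter-poor lattice, choosing the congruence types of the blocks (chains of various lengths as congruence lattices, available for every prime) so that the product of their congruence counts is exactly $\lambda$ for finite $\lambda$, and attaching a single block with $\lambda$ congruences for infinite $\lambda$; the finite gluing leaves $|\Filt(L)|=\kappa$ and $|\Id(L)|=2^\kappa$ intact. The main obstacle throughout is to secure the three cardinal invariants \emph{simultaneously}: the ideal/filter asymmetry already forces the delicate ``downward branching, upward well-founded'' balance, and one must verify that none of the gluings used to pin $|\Con(L)|$ secretly manufactures new filters, ideals, or congruences. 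For (\ref{thmmainc}) this is sharpened by the demand that the \emph{entire} infinite lattice still lie in $\fvar p$ — which realizing everything inside a submodule lattice over $\pfie p$ is designed to guarantee — and that the glued infinite construction has \emph{exactly} $2^m\cdot 3^n$ congruences, with the infinite ideal-rich block contributing no unexpected ones; I expect this congruence computation, together with supplying an ideal-rich block of congruence type $\mathbf 3$ when $m=0$, to be the technical heart of the proof.
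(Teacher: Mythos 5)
Your part (\ref{thmmainb}) and the $\pair m n=\pair 1 0$ case of (\ref{thmmainc}) are essentially sound and close to the paper's own route (the paper uses finitely supported functions $\kappa\to\nnn$ instead of finite subsets of $\kappa$, and the same lattice $\fla p\kappa$ of finite-dimensional subspaces). But there are two genuine gaps. The first is your argument that $L$ generates $\fvar p$: you claim that, by minimality, ``each of its proper subvarieties is distributive, so any nondistributive member generates $\fvar p$.'' This is false. Freese's theorem says $\fvar p$ is minimal among modular nondistributive \emph{congruence} varieties; it says nothing about lattice subvarieties of $\fvar p$ that are not congruence varieties. Indeed, $M_3\in\fvar p$ is nondistributive, yet by J\'onsson's lemma the variety $V(M_3)$ has only $M_3$ and the two-element lattice as subdirectly irreducible members, so it does not contain the simple lattice $\Sub(\vsp p 3)\in\fvar p$; hence $V(M_3)$ is a nondistributive proper subvariety of $\fvar p$ and nondistributivity of $L$ proves nothing. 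The paper closes exactly this hole with Lemma~\ref{lemmageneratVp}: $\fla p\kappa$ is a directed union of the lattices $\fla p n\cong\Con(\vsp p n)$, where $\vsp p n$ is the \emph{free} $\pfie p$-vector space of rank $n$, and by the theory of congruence varieties and Mal'tsev conditions (Hutchinson--Cz\'edli) an identity holds in $\fvar p$ iff it holds in all these congruence lattices of finitely generated free algebras. Some such argument is unavoidable in your proof as well.

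The second gap concerns part (\ref{thmmaina}) and the missing $3$-congruence blocks. Since congruence counts \emph{multiply} over glued sums and every nontrivial block contributes a factor $\geq 2$, your scheme --- which always includes a simple (factor $2$) ideal-rich block --- can never produce an odd finite $\lambda$; and even dropping that block, a prime $\lambda$ (say $\lambda=5$) would force a \emph{single} ideal-rich, filter-poor lattice with exactly $\lambda$ congruences, which is the very thing to be constructed, not a consequence of gluing. Likewise, for infinite $\lambda$ you ``attach a single block with $\lambda$ congruences'' without exhibiting one (chains, for instance, have $2^{|C|}$ congruences, so none of the obvious candidates work). The paper avoids multiplication here entirely: either via Theorem~\ref{thmscnd} applied to a dually well-ordered bounded chain $P$, which forces $\Con(L)=\Princ(L)\cong P$, or by a transfinite induction that at each successor step wraps the previous lattice with a new bottom, a new top, and two incomparable doubly irreducible elements $u_\iota,v_\iota$ --- a construction that adds exactly \emph{one} congruence per step. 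Finally, for general $\pair m n$ in part (\ref{thmmainc}), your blocks ``with congruence lattice $\mathbf 3$'' must be infinite modular lattices in $\fvar p$ (every finite modular lattice has a Boolean congruence lattice, so a power-of-$2$ congruence count); the paper supplies $\ula p\kappa=\fla p\kappa\cup\set{\vsp p\kappa}$ and proves in Lemma~\ref{lemmaConLkpp} that it has exactly three congruences. You flag this as the ``technical heart'' but leave it open, so as it stands the proposal proves neither part (\ref{thmmaina}) in full nor part (\ref{thmmainc}) beyond $\lambda=2$.
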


\begin{remark}\label{remarkone} The case $\lambda=2$ belongs to the scope of \ref{thmmain}\eqref{thmmainc} and provides a \emph{simple}, relatively complemented, modular lattice with more ideals than filters. Since an infinite distributive lattice $L$ always has at least
$|L|$ many congruences by the prime ideal theorem and the lattice in \ref{thmmain}\eqref{thmmainc}  is necessarily infinite, $\fvar p$ in  \ref{thmmain}\eqref{thmmainc} cannot be replaced by the variety of distributive lattices. However, we do not know whether $\fvar p$ can be replaced by a smaller lattice variety. 
\end{remark}

A congruence is \emph{principal} if it is generated by a single pair of elements. For a lattice $L$, let $\Princ(L)=\tuple{\Princ(L);\subseteq}$ denote the ordered set of \emph{principal congruences} of $L$.
In his pioneering paper, Gr\"atzer~\cite{ggprincrep} proved that, up to isomorphism,  every bounded ordered set is of the form $\Princ(L)$ for a bounded lattice $L$. This result was soon followed by several related results proved in  Cz\'edli~\cite{czgsingleinjectiveprinc,czgprincc,czgautprinc,czginjlatcat,czgcometic} and Gr\"atzer~\cite{ggprincrep,gghomoprinc1,gghomoprinc2,gghomoprinc3}.  Here, we add another related result since, as a by-product of the proof of \ref{thmmain}\eqref{thmmaina}, we have found the following statement, which is stronger than part \eqref{thmmaina} of Theorem~\ref{thmmain}. Recall that two element intervals are called \emph{prime intervals} and ordered sets are also called \emph{posets}.

\begin{theorem}\label{thmscnd} 
Let $P$ be a bounded ordered set with at least two elements and let $G$ be an arbitrary group. Then, for every infinite cardinal $\kappa$ such that $\kappa\geq\max\set{|P|, |G|}$, there exists a lattice $L$ with the following four properties:
\begin{enumerate}[\upshape(a)]
\item\label{thmscnda} $|\Id(L)|=2^\kappa$ but $|\Filt(L)|=\kappa=|L|$, so $L$ has more ideals than filters;
\item\label{thmscndb} $\Princ(L)$ is isomorphic to $P$; 
\item\label{thmscndc} every principal congruence of $L$ is generated by a \emph{prime} interval; 
\item\label{thmscndd} the automorphism group  $\Aut(L)$  of $L$ is isomorphic to $G$. 
\end{enumerate}
\end{theorem}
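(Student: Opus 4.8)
The plan is to separate the four requirements into two nearly independent layers: a small ``core'' lattice $K$ that already realizes \eqref{thmscndb}, \eqref{thmscndc} and \eqref{thmscndd}, and a ``bottom gadget'' $D$ that is solely responsible for the ideal/filter asymmetry in \eqref{thmscnda}. For the core I would invoke the existing simultaneous representation technique of Gr\"atzer and the first author (in particular \cite{czgautprinc,czgprincc}): there is a bounded lattice $K$ with $\Princ(K)\cong P$, $\Aut(K)\cong G$, in which every principal congruence is generated by a prime interval, and with $|K|\le\max\set{|P|,|G|,\aleph_0}\le\kappa$. I would additionally arrange $K$ to satisfy the descending chain condition (DCC); since the relevant constructions are assembled from finite gadgets, this can be built in. Fix once and for all a prime interval $\mathfrak q_0$ of $K$, and let $a\in P$ be the element of $P$ to which the congruence it generates corresponds.

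The engine behind \eqref{thmscnda} is the following elementary observation, which I would isolate as a lemma: if a lattice satisfies DCC, then every filter is meet-closed and has a least element, hence is principal, so the number of filters equals the cardinality. Thus the whole point becomes to build $L$ with DCC and $|L|=\kappa$ (forcing $|\Filt(L)|=\kappa=|L|$) while still producing $2^\kappa$ ideals. The model gadget is $D_0=[\kappa]^{<\omega}$, the join-semilattice of finite subsets of $\kappa$ with $\emptyset$ as bottom: it has cardinality $\kappa$, it satisfies DCC (strictly descending chains of finite sets are finite), yet its ideals are exactly the sets of finite subsets of $T$ for $T\subseteq\kappa$, so it has exactly $2^\kappa$ ideals. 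This is the same phenomenon that drives the proof of \ref{thmmain}\eqref{thmmaina}.

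I would then form $L$ by gluing $D$ below $K$, where $D$ is a \emph{rigidified} version of $D_0$: below each atom $\set\xi$ attach a well-ordered chain whose order type depends injectively on $\xi$, so that the order type of the elements under an atom distinguishes the atoms and destroys the natural $\textup{Sym}(\kappa)$-symmetry, while preserving DCC, the $2^\kappa$ ideals, and the $\kappa$ filters. Glue $D$ and $K$ by a Hall--Dilworth-type ordinal sum identifying $1_D$ with $0_K$. Because every element of $D$ lies below every element of $K$ and $L$ is bounded, each automorphism preserves the two layers and fixes the shared point, so $\Aut(L)\cong\Aut(D)\times\Aut(K)\cong\Aut(K)\cong G$; DCC of $L$ follows from DCC of both layers, giving $|\Filt(L)|=|L|=\kappa$; and the $2^\kappa$ ideals of $D$ remain ideals of $L$, giving $|\Id(L)|=2^\kappa$. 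Finally, to keep $\Princ(L)\cong P$ with all principal congruences still prime-generated, I would insert perspectivity-forcing gadgets (diamonds of $M_3$-type) linking every prime interval of $D$ to the fixed $\mathfrak q_0$, so that each prime interval of $D$ generates exactly the congruence corresponding to $a\in P$; then $D$ contributes no new principal congruence and the order $\Princ(L)$ is unchanged.

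The hard part is carrying out this last step without collateral damage. The linking gadgets must (i) not create new automorphisms, so they too must be rigid and compatible with the tags attached under the atoms; (ii) not force $\textup{con}(\mathfrak q_0)$ to collapse more of $K$ than it already did, so that the induced map $\Princ(L)\to P$ remains an order isomorphism and \eqref{thmscndc} is preserved; and (iii) not reintroduce infinite descending chains, nor inflate the filter count, nor deflate the ideal count. Balancing the near-symmetry needed for $2^\kappa$ ideals against the rigidity needed for $\Aut(L)\cong G$ and the congruence-transparency needed for $\Princ(L)\cong P$ is the crux. I expect the bookkeeping of the perspectivities between the $\kappa$ prime intervals of $D$ and the single interval $\mathfrak q_0$, carried out while maintaining DCC and cardinality $\kappa$, to be the most delicate and error-prone portion of the argument.
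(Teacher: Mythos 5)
Your high-level decomposition (a core lattice realizing (b)--(d), a DCC ideal-rich bottom gadget for (a), and the lemma that DCC forces every filter to be principal) is indeed the paper's strategy, and your gadget $[\kappa]^{<\omega}$ would serve as well as the paper's $\fla p\kappa$ for counting ideals and filters. But there is a genuine gap, and it sits exactly where you defer the work: making the gadget invisible to $\Princ(L)$, $\Con(L)$ and $\Aut(L)$. Concretely: (i) since $[\kappa]^{<\omega}$ has no top, your gluing needs a new top $1_D$, and this element covers nothing; consequently, even if your gadgets force every \emph{prime} interval of $D$ to generate the congruence corresponding to $a\in P$, the principal congruences $\textup{con}(x,1_D)$ for $x\in D$ are not controlled by prime intervals at all (compare the chain $\omega+1$, where a congruence can collapse every prime interval yet separate the top), so both (b) and (c) are threatened. (ii) In the glued sum, for $x<1_D=0_K<z$ one has $\textup{con}(x,z)=\textup{con}(x,1_D)\vee\textup{con}(0_K,z)$, so $\Princ(L)$ contains joins of (the congruence above $a$) with principal congruences of $K$, computed in $\Con(L)$; for an \emph{arbitrary} prime interval $\mathfrak q_0$ -- which is all you fix -- such joins need not be principal congruences of $K$, so the map $\Princ(L)\to P$ need not be onto a copy of $P$. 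The only workable choice is to force every prime interval of $D$ to generate $\nablaell{L}$ (i.e., $a=1_P$), but you neither require this nor show how the perspectivity gadgets -- new elements straddling the glued sum, each altering covers, ideals, congruences and potentially automorphisms -- could be built; and your rigidifying chains, being rigid but \emph{not simple}, add yet more prime intervals and congruences that would also need linking.

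The paper avoids all of this bookkeeping with one structural idea your proposal lacks: make the ideal-rich gadget \emph{simple}, and attach the core \emph{beside} it rather than above it. The paper's $L'$ is obtained from the simple lattice $\fla p\kappa$ by replacing each atom interval $[0,a]$ with a simple, rigid, pairwise non-isomorphic lattice $S(\iota_a)$, so $L'$ stays simple and becomes rigid; then the core $H$ (Cz\'edli's selfdual lattice of length $16$ with $\Princ(H)\cong P$, $\Aut(H)\cong G$, prime-interval-generated principal congruences) is placed so that $L'\cap H=\set{0}$, and two extra elements $u$, $v=1_H$ together with a new top make $\set{0,z,u,v,1}$ a copy of $M_3$ for every nonzero $z\in L'$. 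Hence any congruence collapsing a pair not inside $H$ is $\nablaell{L}$, the restriction map $\Con(L)\to\Con(H)$ is an isomorphism, and (b), (c), (d) transfer from $H$ with no perspectivity bookkeeping; (a) follows since $H$ has finite length, $L$ inherits DCC and cardinality $\kappa$, and the $2^\kappa$ ideals of $\fla p\kappa$ survive. In short, the spanning-$M_3$ configuration is precisely the uniform, verifiable realization of the linking step that your proposal leaves as an unresolved -- and, as it stands, flawed -- plan.
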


Note that while Cz\'edli~\cite{czgautprinc} gives a \emph{selfdual} lattice $L$ to represent $P$ and $G$ simultaneously,  this theorem yields, in some sense, the ``most non-selfdual'' $L$ for the same purpose. 
We take the opportunity to include the following observation, which gives the selfdual variant of a result of Baranski\v\i~\cite{baranskij84} and Urquhart~\cite{urquhart}; this observation will be proved in few lines in Section~\ref{sectionproof}.

\begin{observation}\label{remsdLdaut}
For every \emph{finite} distributive lattice $D$ and every \emph{finite} group $G$, there is a finite \emph{selfdual} lattice $L$ such that $D\cong\Con(L)$ and $G\cong\Aut(L)$. 
\end{observation}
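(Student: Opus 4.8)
The plan is to derive the selfdual statement from the (non-selfdual) theorem of Baranski\v\i~\cite{baranskij84} and Urquhart~\cite{urquhart} by symmetrization. First I would invoke that theorem to fix a finite lattice $K$ with $\Con(K)\cong D$ and $\Aut(K)\cong G$. Passing to the order dual $K^\partial$ changes neither invariant: a congruence of $K$ is literally a congruence of $K^\partial$, so $\Con(K^\partial)=\Con(K)\cong D$, and an order automorphism of $K$ is the same map as an order automorphism of $K^\partial$, so $\Aut(K^\partial)=\Aut(K)\cong G$. Hence self-duality is not obstructed by either invariant, and the whole task is to assemble a \emph{selfdual} $L$ realizing the same $\Con$ and $\Aut$. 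The natural idea is to build $L$ out of the two mirror copies $K$ and $K^\partial$, arranged vertically, in such a way that an order-reversing bijection interchanges them; then $L\cong L^\partial$.

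The obvious attempts fail on the congruence count, and pinning this down is the crux. A glued (ordinal) sum $K\oplus K^\partial$ at a single waist element, or the direct product $K\times K^\partial$, is indeed selfdual, but in either case the two halves are congruence-independent, so one obtains $\Con(L)\cong\Con(K)\times\Con(K^\partial)\cong D\times D$ rather than $D$; the horizontal sum, at the opposite extreme, collapses almost all congruences and is useless here. Thus the two mirror copies must be \emph{linked} so that every join-irreducible congruence of the $K$-side is forced to coincide with its mirror on the $K^\partial$-side. I would achieve this by joining $K$ to $K^\partial$ through a small \emph{simple} (hence congruence-trivial) connector, positioned so that each prime interval $p$ of $K$ and its mirror prime interval $p'$ of $K^\partial$ become projective across the connector. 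Then $\mathrm{con}_L(p)=\mathrm{con}_L(p')$ for every mirror pair, which identifies $D\times D$ down to its diagonal and yields $\Con(L)\cong\Con(K)\cong D$, while the connector, being simple, contributes no further congruences.

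For the automorphism group I would exploit that every automorphism of the bounded lattice $L$ fixes $0_L$ and $1_L$ and preserves order; since in the vertical arrangement $K$ is the bottom part and $K^\partial$ the top part, no automorphism can interchange the two halves (only the duality, an anti-automorphism, does). Hence each automorphism preserves the two halves and restricts to an automorphism of $K$; choosing the connector to be rigid forces its restriction to the top half to mirror its restriction to the bottom half, so that $\Aut(L)\cong\Aut(K)\cong G$. I expect the main obstacle to be the linking step: guaranteeing the projectivities that transmit congruences, so as to defeat the $D\times D$ doubling, \emph{without} perturbing the internal congruence lattice of either copy and \emph{without} introducing spurious symmetries — that is, making the self-duality, the undoubled isomorphism $\Con(L)\cong D$, and the rigidity $\Aut(L)\cong G$ all hold simultaneously.
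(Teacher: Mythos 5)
You have correctly ruled out the naive symmetrizations (the glued sum $K\dot+K^\partial$ and the product $K\times K^\partial$ are selfdual but have congruence lattice isomorphic to $D\times D$), and your bookkeeping is right: $\Con(K^\partial)=\Con(K)$ and $\Aut(K^\partial)=\Aut(K)$ hold literally. But the argument stops exactly where the mathematical content begins. The ``simple connector'' that is supposed to make every prime interval $p$ of $K$ projective to its mirror $p'$ in $K^\partial$ is never constructed, and there is no routine construction to appeal to: the gadget must simultaneously (i) be attached along an $\Aut(K)$-invariant family of prime intervals in an $\Aut(K)$-equivariant way, since otherwise the automorphisms of $K$ need not \emph{extend} to $L$ and $\Aut(L)$ could be a proper subgroup of $G$ --- your rigidity discussion only excludes \emph{extra} automorphisms, not the loss of existing ones; (ii) transmit congruences so that the restriction to $K$ of the congruence of $L$ generated by $p$ is exactly $\mathrm{con}_K(p)$, i.e., the collapse fed back into $K$ through the connector elements must not enlarge anything (this is a congruence-preserving-extension requirement, not a formality); (iii) itself contribute no congruences and no symmetries; and (iv) be compatible with an order-reversing bijection interchanging the two halves, or the selfduality of $L$ is lost. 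Achieving (i)--(iv) at once is a construction of the same order of difficulty as the representation theorems you are trying to build on; stated as a wish list, it is a program, not a proof.

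The paper avoids all of this: it does not symmetrize the Baranski\v{\i}--Urquhart theorem, but instead quotes a result that already has selfduality built in, namely Cz\'edli~\cite[Theorem 1.1]{czgautprinc}, recorded as \eqref{pbxdPdzrGjsBmQ} and \eqref{pbxlnLpBTp}: for finite $P$ and finite $G$ there is a \emph{finite selfdual} lattice $H$ with $\Princ(H)\cong P$, $\Aut(H)\cong G$, and every principal congruence generated by a prime interval. Since in a finite lattice the join-irreducible congruences are exactly those generated by prime intervals, choosing $P$ according to $J(D)$ makes $J(\Con(H))\cong J(D)$, and the structure theorem of finite distributive lattices then gives $\Con(H)\cong D$, so $L:=H$ works. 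In other words, the paper's proof is a short translation from a $\Princ$-representation to a $\Con$-representation; the heavy combinatorial construction that your plan would require is precisely what the cited theorem supplies.
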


As a candidate for the ``most non-selfdual'' variant, we present the following by-product  of
the \emph{proof} of Theorem~\ref{thmscnd}.

\begin{corollary}\label{corolsdhhGrt}
For every \emph{finite} distributive lattice $D$ and every group $G$, if $\kappa$ is an infinite cardinal with $\kappa\geq |G|$, then there is a lattice $L$ with $2^\kappa$ many ideals but only $\kappa$ many filters such that $D\cong\Con(L)$ and $G\cong\Aut(L)$.
\end{corollary}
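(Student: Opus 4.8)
The plan is to derive Corollary~\ref{corolsdhhGrt} as a direct by-product of the construction proving Theorem~\ref{thmscnd}, exactly as the phrasing ``by-product of the proof'' signals. The key observation is that Theorem~\ref{thmscnd} already delivers, for any bounded poset $P$ and any group $G$ with $\kappa\geq\max\set{|P|,|G|}$, a lattice $L$ realizing $\Princ(L)\cong P$ and $\Aut(L)\cong G$ together with the ideal/filter asymmetry $|\Id(L)|=2^\kappa$, $|\Filt(L)|=\kappa$. So the first step is to reduce the statement about $\Con(L)$ for a \emph{finite distributive} $D$ to a statement about $\Princ(L)$ for a suitable bounded poset, and then feed that poset into Theorem~\ref{thmscnd}.

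\begin{proof}[Proof plan for Corollary~\ref{corolsdhhGrt}]
First I would recall the structure theory connecting finite distributive lattices with the posets of their join-irreducible elements: by the classical representation (Birkhoff duality), a finite distributive lattice $D$ is determined up to isomorphism by the ordered set $J(D)$ of its nonzero join-irreducible elements, and conversely $D\cong\Id(J(D))$, the lattice of order ideals (downsets) of $J(D)$. The bridge to the present setting is that, for a lattice $L$ whose congruence lattice is finite, $\Con(L)$ is automatically distributive and the join-irreducible congruences are precisely the principal ones that are join-irreducible; thus $\Princ(L)$, ordered by inclusion and augmented with $\Delta_L$ and $\nabla_L$, controls $\Con(L)$. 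Concretely, if one arranges that the \emph{principal} congruences of $L$ form an ordered set isomorphic to $J(D)$ with a bottom (the zero congruence $\Delta_L$) and a top attached, then the full congruence lattice $\Con(L)$, being generated as a lattice by its principal congruences, becomes isomorphic to the lattice of order ideals of $J(D)$, hence to $D$.

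The second step is to choose the bounded poset $P$ correctly. I would take $P$ to be $J(D)$ equipped with a new least element $0$ and a new greatest element $1$ (a bounded poset with at least two elements, as required), chosen so that its ``interior'' matches the join-irreducible poset of $D$. Since $D$ is finite, $|P|$ is finite, so the hypothesis $\kappa\geq|P|$ of Theorem~\ref{thmscnd} holds automatically from $\kappa\geq|G|$ and $\kappa$ infinite; thus $\kappa\geq\max\set{|P|,|G|}$ is satisfied. Applying Theorem~\ref{thmscnd} to this $P$ and the given $G$ yields a lattice $L$ with $\Princ(L)\cong P$, $\Aut(L)\cong G$, $|\Id(L)|=2^\kappa$, and $|\Filt(L)|=\kappa$. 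Property~\eqref{thmscndc} of that theorem, namely that every principal congruence is generated by a prime interval, is the technical ingredient that keeps the join-irreducible principal congruences in tight correspondence with the atoms/join-irreducibles of the intended $\Con(L)$, so I would use it to argue that no unexpected principal congruences arise and that the induced map from order ideals of $J(D)$ to congruences is a lattice isomorphism.

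The main obstacle I anticipate is the passage from ``$\Princ(L)\cong P$ as an ordered set'' to ``$\Con(L)\cong D$ as a lattice.'' Theorem~\ref{thmscnd} only controls the ordered set of principal congruences; it does not, a priori, assert that \emph{arbitrary} (non-principal, possibly infinitely generated) congruences are exhausted by finite joins of principal ones, nor that the inclusion order on principal congruences reassembles into the correct lattice of \emph{all} congruences. I would handle this by leaning on the internal structure of the lattice produced in the proof of Theorem~\ref{thmscnd}: one must verify that every congruence of $L$ is a (possibly infinite, but here finite since $P\setminus\set{0,1}=J(D)$ is finite) join of principal congruences, so that $\Con(L)$ is the algebraic lattice of order ideals of $\Princ(L)\setminus\set{\nabla_L}$, and that this lattice is isomorphic to $D$. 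In effect this requires inspecting the explicit congruence-generation in the construction and confirming compactness of the relevant principal congruences; this verification, rather than the cardinal bookkeeping for ideals and filters (which transfers verbatim from Theorem~\ref{thmscnd}), is where the real work lies.
\end{proof}
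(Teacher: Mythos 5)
Your plan follows the same route as the paper's proof: manufacture a poset from $J(D)$, feed it into Theorem~\ref{thmscnd}, and recover $\Con(L)$ from its join-irreducible congruences via the structure theorem of finite distributive lattices. (The step you flag at the end as ``the real work'' is actually the easy part, and the paper dispatches it in one line: $\Princ(L)\cong P$ is finite and every congruence is a join of principal congruences, so $\Con(L)$ is finite and no compactness or infinite-join issues arise.) The genuine gap is your choice of $P$: adjoining a \emph{new greatest element} to $J(D)$ makes the conclusion false. Since $P$ is bounded and every congruence of $L$ is a join of principal ones, the greatest element of $\Princ(L)\cong P$ must be $\nabla_L$; so your new top corresponds to $\nabla_L$. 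By Theorem~\ref{thmscnd}\eqref{thmscndc}, $\nabla_L$ is generated by a prime interval $a\prec b$, and then the chain description of congruence joins (the very argument needed to handle the other principal congruences) shows that $\nabla_L$ is \emph{join-irreducible} in $\Con(L)$: if $\nabla_L=\Psi_0\vee\Psi_1$, then $\pair a b\in\Psi_0\cup\Psi_1$, whence $\nabla_L\leq\Psi_0$ or $\nabla_L\leq\Psi_1$. In particular, $\nabla_L$ is \emph{not} the join of the congruences corresponding to the elements of $J(D)$. Consequently $J(\Con(L))=\Princ(L)\setminus\set{\Delta_L}$ is isomorphic to $J(D)$ with a new top adjoined, and the structure theorem yields that $\Con(L)$ is isomorphic to $D$ with a new top adjoined; thus $|\Con(L)|=|D|+1$ and $\Con(L)\not\cong D$. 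The smallest counterexample is $D$ the two-element chain: your $P$ is then a three-element chain, so $\Princ(L)$, and hence $\Con(L)$ (joins of a chain of congruences stay in the chain), is a three-element chain, not $D$.

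The bookkeeping that works is asymmetric. A new \emph{bottom} must indeed be adjoined, because $\Delta_L$ is principal but, being the zero of $\Con(L)$, is not a (nonzero) join-irreducible element; your instinct there is sound, and the paper's own text, which takes $P:=J(D)$ with no bounds adjoined and asserts $J(\Con(L))=\Princ(L)$, glosses over exactly this point (and also over the fact that $J(D)$ need not be a bounded poset at all, as Theorem~\ref{thmscnd} requires). But no new top may be adjoined: $\nabla_L$ itself has to play the role of the greatest element of $J(D)$. So the correct input is $J(D)$ with only a new least element, and this is a legitimate bounded poset precisely when $J(D)$ has a greatest element, that is, when $1_D$ is join-irreducible in $D$. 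This restriction cannot be polished away within your framework: since Theorem~\ref{thmscnd}\eqref{thmscndc} forces the top of $\Con(L)$ to be join-irreducible, \emph{no} choice of $P$ makes the lattice of Theorem~\ref{thmscnd} satisfy $\Con(L)\cong D$ when the top of $D$ is join-reducible (for instance $D=B_2$, the four-element Boolean lattice). For such $D$ one must go beyond the theorem used as a black box, e.g., by taking suitable direct products of lattices supplied by it and rechecking the ideal, filter, and automorphism counts -- a point on which the paper's own one-paragraph proof is itself too terse.
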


Note that Wehrung~\cite{wehrungbigresult} rules out the chance of dropping the adjective ``finite'' from this corollary since there are infinite algebraic distributive lattices that are not representable by congruence lattices of lattices. 
Now we give more details on $\fvar p$ occurring in Theorem~\ref{thmmain}.

\begin{definition}\label{deffvar}
A variety $\mathcal U$ of lattices is called a \emph{congruence variety} if there exists a variety $\mathcal W$ of general algebras such that $\mathcal U$ is generated by the congruence lattices of 
all members of $\mathcal W$; the reader may want but need not 
see J\'onsson~\cite{jonsson} for a survey. In the situation just described, $\mathcal U$ is the congruence variety \emph{determined by} $\mathcal W$. For a prime number $p$ or $p=0$, we denote by $\fvar p$ the congruence variety determined by the variety of all vector spaces over the prime field of characteristic $p$.
\end{definition}

According to a remarkable discovery of Nation~\cite{nation}, not every lattice variety is a congruence variety. It was observed by Freese~\cite{freeseabstr} and published with a proof in Freese, Herrmann, and Huhn~\cite[Corollary 14]{freesehh} that the congruence varieties $\fvar p$ for $p$ prime or zero are pairwise distinct \emph{minimal} nondistributive modular congruence varieties and there is no other such variety. Besides the modular law,
every $\fvar p$ and so all the $L(p,2^m\cdot 3^n,\kappa)$ in \ref{thmmain}\eqref{thmmainc} satisfy many lattice identities that are stronger than modularity. Since the lattice  $L(p,2^m\cdot 3^n,\kappa)$ generates $\fvar p$,
we conclude the following fact.

\begin{remark} For every prime $p$ or $p=0$ and for every lattice identity $\Gamma$,  
the algorithm given in 
Hutchinson and Cz\'edli~\cite{hutchczg}
is appropriate to decide whether $\Gamma$ holds in the lattice $L(p,2^m\cdot 3^n,\kappa)$ occurring in Theorem~\ref{thmmain}\eqref{thmmainc}. 
\end{remark}

\section{Proving the two theorems}\label{sectionproof}
The rest of the paper is devoted to the proofs
our statements formulated  in the previous section.
Our notation is standard in lattice theory. 
For concepts or notation that are neither defined, nor referenced here, see Gr\"atzer~\cite{ggCFL2};
see \texttt{tinyurl.com/lattices101} for its freely downloadable parts. Some familiarity with universal algebra is assumed; see, for example, Burris and Sankappanawar~\cite{bursan}; note that its Millennium Edition  is freely available from\\
\url{http://www.math.uwaterloo.ca/~snburris/htdocs/ualg.html}\ .

Recall that a lattice $M$ is said to satisfy the \emph{Descending Chain Condition} if whenever $x_0,x_1,x_2,\dots\in M$ and $x_0\geq x_1\geq x_2\geq\dots$, then $x_n=x_{n+1}=x_{n+2}=\dots$ for some $n\in\nnn$; see, for example, Gr\"atzer~\cite[page 105]{ggfoundbook}. 
Filters of the form $\filter a:=\set{x: x\geq a}$ are  \emph{principal filters}. 
The following easy statement belongs to the folklore; having no reference at hand, we are going to present a proof. 

\begin{lemma}\label{lemmadccfp}
If a lattice $K$ satisfies the Descending Chain Condition, then every filter of $K$ is a principal filter.
\end{lemma}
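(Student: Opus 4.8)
The plan is to locate, inside an arbitrary (nonempty) filter $F$ of $K$, a single element that generates it. First I would invoke the standard consequence of the Descending Chain Condition that \emph{every nonempty subset of $K$ has a minimal element}: otherwise, by repeatedly choosing a strictly smaller element (dependent choice), one could build an infinite strictly descending chain, contradicting DCC. Applying this to $F$, I obtain an element $a\in F$ that is minimal in $F$, meaning that no member of $F$ lies strictly below $a$.

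The key step is to upgrade this minimal element to a \emph{least} element of $F$, and here the meet-closure of filters does the work. Given any $x\in F$, the meet $a\wedge x$ again belongs to $F$, since filters are closed under finite meets; moreover $a\wedge x\leq a$. By the minimality of $a$ we must have $a\wedge x=a$, that is, $a\leq x$. As $x\in F$ was arbitrary, $a$ is the least element of $F$.

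Finally I would read off the principal filter. From $a\leq x$ for all $x\in F$ we get $F\subseteq\filter a$, and since $F$ is an up-set containing $a$ we get the reverse inclusion $\filter a\subseteq F$; hence $F=\filter a$ is a principal filter. The only point requiring care --- and the nearest thing to an obstacle --- is the passage from \emph{minimal} to \emph{least}: DCC by itself only supplies a minimal element, and it is the closure under meets (not merely $F$ being an up-set) that forces this minimal element to lie below every other member of $F$.
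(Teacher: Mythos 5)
Your proof is correct and follows essentially the same route as the paper's: the Descending Chain Condition supplies a minimal element, and closure of the filter under meets upgrades that minimal element to a least element, whence $F=\filter a$. The only cosmetic difference is that the paper applies DCC to the auxiliary set $M=\set{f\wedge g: g\in F}$ rather than to $F$ itself (a choice made so the argument can be recycled later, in \eqref{pbxGnwgrlMmkg}, where DCC fails globally); for the lemma as stated, your direct version is equally valid.
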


\begin{proof} Let $F$ be a filter of $K$.  Pick an element $f\in F$. 
Let us consider the set  $M:=\set{f\wedge g: g\in F}$. The assumption on $K$ implies that  $M$  has a minimal element $\whot f:=f\wedge \whot g$, where $\whot g\in F$. For later reference, note that 
\begin{equation}
\parbox{6cm}{the rest of the proof relies only on the fact that $M$ has a minimal element.}
\label{pbxMhsMlmNt}
\end{equation}
Since $\whot f\in M\subseteq F$, we have that $\filter \whot f\subseteq F$. Conversely, if  $
\whot f\nleq h$ held for some $h\in F$, then $\whot g\wedge h\in F$ would yield that 
 $\whot f> \whot f\wedge h=(f\wedge \whot g)\wedge h=f\wedge (\whot g\wedge h)\in M$, contradicting the minimality of $\whot f$ in $M$. This shows that $h\in F\setminus\filter \whot f$ is impossible, whence $F=\filter f$.
\end{proof}


Let $p$ be a prime or 0, and let $\pfie p$ denote the prime field of characteristic $p$. It is either $\mathbb Z_p$, the ring of modulo $p$ residue classes of integers, or $\mathbb Q$, the field of rational numbers. For a cardinal $\kappa$, finite or infinite, we often think of $\kappa$ as the set of all ordinal numbers $\iota$ with $|\iota|<\kappa$ without further warning.  For example, $4=\set{0,1,2,3}$ and $\aleph_0=\nnn$ is the set of nonnegative integers. In the present paper, $\iota$ and $\mu$ will \emph{always} denote ordinal numbers without explicitly saying so all the times. Similarly, $\kappa$ and $\lambda$ stand for cardinal numbers. 

Let $\vsp p \kappa$
denote the $\kappa$-dimensional vector space over $\pfie p$. It consists of all those (choice) functions $x\colon \kappa\to\pfie p$ for which 
$\{\iota: |\iota| <\kappa$ and $x(\iota)\neq 0\}$ is a finite set.
For  $|\mu|<\kappa$, the function $e_\mu\colon \kappa\to \pfie p$ is defined by $e_\mu(\mu)=1$ and, for $\iota\neq\mu$, $e_\mu(\iota)=0$. 
The set $\set{e_\iota: |\iota|<\kappa}$ is the \emph{natural basis} of $\vsp p\kappa$. The subspaces of $\vsp p\kappa$ form a complete lattice
$\clat p\kappa$. Using the well-known dimension equation $\dim(a\vee b)+\dim(a\wedge b)=\dim(a)+\dim(b)$ for $a,b\in\clat p\kappa$,  it follows that the finite dimensional subspaces of $\vsp p\kappa$ form a sublattice of $\clat p\kappa$; we denote this sublattice by $\fla p\kappa$.
Adding the top element $1_{\clat p\kappa} = \vsp p\kappa$ of $\clat p\kappa$ to $\fla p\kappa$, we obtain another lattice, $\ula p\kappa:=\fla p\kappa\cup\set{\vsp p\kappa}$.
If $\kappa$ is finite, then $\ula p\kappa = \fla p\kappa= \clat p\kappa$. If $\kappa$ is infinite, then $\ula p\kappa$ is a proper sublattice of $\clat p\kappa$,  $\ula p\kappa$ is a bounded lattice but $\fla p\kappa=\ula p\kappa\setminus\set{1_{\ula p\kappa}}$ has no largest element.

\begin{lemma}\label{lemmageneratVp}
Let $p$ be a prime number or $0$. Then for every infinite cardinal $\kappa$, each of $\fla p\kappa$ and $\ula p\kappa$ 
generates the variety $\fvar p$.
\end{lemma}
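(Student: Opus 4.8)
The plan is to show that each of $\fla p\kappa$ and $\ula p\kappa$ generates exactly the congruence variety $\fvar p$, which by Definition~\ref{deffvar} is determined by the variety of all vector spaces over $\pfie p$. Since $\fla p\kappa$ and $\ula p\kappa$ are built from subspaces of $\vsp p\kappa$, the natural strategy splits into two inclusions of varieties: first, the variety generated by our lattice is \emph{contained} in $\fvar p$; second, it \emph{contains} $\fvar p$.

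\begin{proof}[Proof sketch]
I would first verify the easy inclusion. Both $\fla p\kappa$ and $\ula p\kappa$ are sublattices of $\clat p\kappa$, the full subspace lattice of $\vsp p\kappa$, which is precisely a congruence lattice of a $\pfie p$-vector space and hence lies in $\fvar p$. Since $\fvar p$ is a variety and therefore closed under sublattices, every lattice identity of $\fvar p$ holds in $\clat p\kappa$ and a fortiori in its sublattices $\fla p\kappa$ and $\ula p\kappa$. Thus the variety generated by either lattice is contained in $\fvar p$.

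The substantive direction is the reverse inclusion: I must show that every identity \emph{failing} somewhere in $\fvar p$ already fails in $\fla p\kappa$ (and in $\ula p\kappa$). Equivalently, it suffices to recover all the finite-dimensional subspace lattices $\clat p n$ (for finite $n$) inside our lattices, since $\fvar p$ is generated by the subspace lattices of finite-dimensional $\pfie p$-spaces. The key observation is that because $\kappa$ is infinite, $\vsp p\kappa$ contains, for each finite $n$, the subspace spanned by $\set{e_0,\dots,e_{n-1}}$, and the interval of $\clat p\kappa$ below this $n$-dimensional subspace is isomorphic to $\clat p n$. Every such subspace is finite-dimensional, so this whole interval consists of finite-dimensional subspaces and therefore sits inside $\fla p\kappa$ (hence also inside $\ula p\kappa$). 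Consequently $\fla p\kappa$ contains an isomorphic copy of $\clat p n$ as a sublattice for every finite $n$, and so the variety it generates contains all the finite-dimensional subspace lattices of $\pfie p$-spaces, which is exactly $\fvar p$.

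The main obstacle is making precise that $\fvar p$ really is generated by the \emph{finite}-dimensional subspace lattices, rather than requiring the infinite-dimensional ones. Here I would invoke the standard fact that a lattice identity holding in all finite-dimensional subspace lattices over a field automatically holds in all subspace lattices over that field (this is a locality/compactness property of lattice identities: an identity involves finitely many variables, hence its validity can be checked on the finitely many subspaces generated by witnessing elements, and any finite collection of finite-dimensional subspaces generates a finite-dimensional subspace). Granting this, the finite-dimensional lattices $\clat p n$ generate $\fvar p$, and combining the two inclusions yields that $\fla p\kappa$ and $\ula p\kappa$ each generate $\fvar p$. A small extra remark handles $\ula p\kappa$: adjoining the single top element $\vsp p\kappa$ to $\fla p\kappa$ does not enlarge the generated variety beyond $\fvar p$, since $\ula p\kappa$ is still a sublattice of $\clat p\kappa\in\fvar p$, and it does not shrink it below $\fvar p$, since $\ula p\kappa$ already contains $\fla p\kappa$ as a sublattice.
\end{proof}
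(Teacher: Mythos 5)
Your overall architecture is sound and in fact close to the paper's: both arguments reduce everything to the finite-dimensional subspace lattices $\clat p n$. The easy inclusion (both lattices are sublattices of $\clat p\kappa\in\fvar p$, and varieties are closed under sublattices) is correct, as is the observation that $\fla p\kappa$ contains, for every finite $n$, an interval isomorphic to $\clat p n$. The genuine gap is in your justification of the crux, namely that the lattices $\clat p n$ with $n$ finite generate all of $\fvar p$. Your parenthetical ``locality/compactness'' argument does not work: if an identity fails in the subspace lattice of an infinite-dimensional space, the witnessing lattice elements are subspaces that may themselves be \emph{infinite}-dimensional, and the sublattice they generate is in general infinite and certainly not contained in the finite-dimensional part. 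You are conflating the subspace spanned by finitely many finite-dimensional subspaces (which is indeed finite-dimensional) with the sublattice of $\clat p\kappa$ generated by finitely many arbitrary subspaces. So nothing in your sketch transfers a failure of an identity from infinite-dimensional witnesses to finite-dimensional ones, and that transfer is exactly the nontrivial content of the ``standard fact'' you invoke.

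The fact itself is true, but it must be supplied by a theorem rather than by this heuristic. The paper obtains it from the theory of congruence varieties and Mal'tsev conditions: $\fla p\kappa$ is a directed union of copies of $\fla p n=\Sub(\vsp p n)\cong\Con(\vsp p n)$, the spaces $\vsp p n$ are precisely the free algebras of finite rank in the variety of $\pfie p$-vector spaces, and by a known result (cited to Hutchinson and Cz\'edli) a lattice identity holds in a congruence variety if and only if it holds in the congruence lattices of the finite-rank free algebras of the determining variety. If you want an elementary, self-contained patch instead, you can argue as follows: for every $\pfie p$-vector space $V$ one has $\Sub(V)\cong\Id\bigl(\Sub_{\mathrm{fin}}(V)\bigr)$, where $\Sub_{\mathrm{fin}}(V)$ denotes the lattice of finite-dimensional subspaces (an arbitrary subspace is determined by the ideal of its finite-dimensional subspaces); $\Sub_{\mathrm{fin}}(V)$ is a directed union of copies of the $\clat p n$; and the ideal lattice $\Id(L)$ of any lattice $L$ satisfies every identity of $L$, which one proves from the description that $x\in p(I_1,\dots,I_k)$ iff $x\le p(a_1,\dots,a_k)$ for some $a_i\in I_i$, using directedness of ideals. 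Either route legitimately closes the gap; the compactness argument as you wrote it does not.
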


\begin{proof}
Since $\vsp p \kappa$ is a directed union of a system of copies of finite dimensional vector spaces $\vsp p n$, where $n\in\nnp:=\set{1,2,3,\dots}=\nnn\setminus\set 0$, 
\begin{equation}
\parbox{6cm}{
$\fla p\kappa$ is a directed union of sublattices isomorphic to $\fla p n$, where $n\in\nnp$.}
\label{pbxDirUn}
\end{equation} 
Hence, a lattice identity holds in $\fla p \kappa$ iff it holds in $\fla p n$ for all $n\in\nnp$. Let $\modvar p$ denote the variety of all vector spaces over $\pfie p$, and note that $\vsp p n$ is the free algebra on $n$ generators in this variety. 
Thus, using the canonical isomorphism between $\Con(\vsp p n)$ and $\fla p n=\Sub(\vsp p n)$, it follows that a lattice identity $\Gamma$ holds in $\fla p\kappa$ iff it holds in the congruence lattices of the free algebras of finite ranks in $\modvar p$. 
It is well known from the theory of congruence varieties and Mal'tsev conditions that this is equivalent to the satisfaction of $\Gamma$ in $\fvar p$; see, for example, Hutchinson and Cz\'edli~\cite{hutchczg}. Hence, $\fla p\kappa$ generates $\fvar p$. So does $\ula p\kappa$ since, as it is straightforward to see, 
$\Gamma$ holds in $\fla p\kappa$ iff it holds in $\ula p\kappa$.
\end{proof}

The \emph{glued sum}  $L_0\dot+ L_1$ 
of lattices $L_0$ with greatest element and 
$L_1$ with least element is their Hall--Dilworth gluing along $L_0\cap L_1=\set{1_{L_0}}=\set{0_{L_1}}$; see, for example, Gr\"atzer~\cite[Section IV.2]{ggfoundbook}.

\begin{lemma}\label{lemmaImtF}
If $p$ is a prime number or $0$,  $\kappa$ is an infinite cardinal, and $K$ is a lattice of finite length such that $|K|\leq \kappa$, then each of the glued sums $K\dot+\fla p\kappa$ and $K\dot+\ula p\kappa$ has $2^\kappa$ many ideals but only $\kappa$ many filters. So do $\fla p\kappa$ and $\ula p\kappa$. Furthermore, so does every lattice $L'$ that we obtain from $\fla p\kappa $ or $\ula p\kappa$ by replacing, for each atom $a$, the prime interval $[0,a]$ by a lattice $K(a)$ of finite length and size at most $\kappa$.
\end{lemma}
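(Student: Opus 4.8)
The plan is to treat all the lattices in the statement uniformly: to count \emph{filters} via the Descending Chain Condition together with Lemma~\ref{lemmadccfp}, and to count \emph{ideals} by identifying them with subspaces of $\vsp p\kappa$. The cardinal $2^\kappa$ will always come out as the number of ideals, and $\kappa$ as the number of filters.

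First I would record that each lattice occurring here satisfies the Descending Chain Condition. In $\fla p\kappa$ a strictly descending chain of finite-dimensional subspaces has strictly decreasing (nonnegative integer) dimensions, hence terminates; adjoining the top element in $\ula p\kappa$ cannot spoil this. Gluing a lattice $K$ of finite length below, or replacing each prime interval $[0,a]$ by a finite-length $K(a)$, only inserts finitely long pieces beneath the subspace part, so the Descending Chain Condition persists in every case. By Lemma~\ref{lemmadccfp} every filter is then principal, and since distinct elements yield distinct principal filters, the number of filters equals the number of elements. It remains to see that this number is $\kappa$: the set $\vsp p\kappa$ has cardinality $\kappa$ (finitely supported functions from a $\kappa$-element index set into the countable field $\pfie p$), and each finite-dimensional subspace is the span of a finite subset, so $|\fla p\kappa|=\kappa=|\ula p\kappa|$; adding $|K|\le\kappa$, or at most $\kappa$ atoms each blown up by a $K(a)$ of size $\le\kappa$, keeps the total at $\kappa$. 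Thus each lattice has exactly $\kappa$ filters.

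For the ideals I would first dispose of $\fla p\kappa$ by exhibiting a bijection with the subspaces of $\vsp p\kappa$: send an ideal $I$ to $W:=\bigcup_{V\in I}V$, which is a subspace because $I$ is up-directed, and send a subspace $W$ to $\{V\in\fla p\kappa : V\subseteq W\}$; these are mutually inverse, since a finite-dimensional $V\subseteq W$ is spanned by finitely many vectors, each lying in some member of $I$. The number of subspaces of $\vsp p\kappa$ is exactly $2^\kappa$: at most $2^{|\vsp p\kappa|}=2^\kappa$, and at least $2^\kappa$ because the subspaces $\langle e_\iota : \iota\in S\rangle$ for $S\subseteq\kappa$ are pairwise distinct. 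Hence $|\Id(\fla p\kappa)|=2^\kappa$, and passing to $\ula p\kappa$ adds only the single improper ideal. The glued sums and the blown-up lattice are then squeezed between the same bounds: for the upper bound each such lattice $L$ has $|L|=\kappa$, so $|\Id(L)|\le 2^{|L|}=2^\kappa$; for the lower bound the $2^\kappa$ ideals indexed by subsets $S\subseteq\kappa$ survive, realized in $K\dot+\fla p\kappa$ as $K\cup\{V : V\subseteq\langle e_\iota:\iota\in S\rangle\}$ and in the blown-up $L'$ as the ideals generated by $\set{a_\iota : \iota\in S}$, which contain the atom $a_\iota$ (equivalently, all of $K(a_\iota)$) precisely when $\iota\in S$ and are therefore pairwise distinct. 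This pins down $|\Id(L)|=2^\kappa$ throughout.

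The main obstacle is not the cardinal arithmetic but the bookkeeping in the final clause: one must make precise the lattice $L'$ obtained by replacing each $[0,a]$ with $K(a)$, checking that the join of two elements sitting in different $K(a)$'s is computed in the subspace part (so that $L'$ genuinely is a lattice, of size $\kappa$, still satisfying the Descending Chain Condition), and verify that the $2^\kappa$ ideals constructed above remain distinct after the blow-up. Once this construction is set up, both counts follow from the two preceding paragraphs essentially verbatim.
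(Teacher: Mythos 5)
Your proof is correct and follows essentially the same route as the paper's: filters are counted by verifying the Descending Chain Condition and invoking Lemma~\ref{lemmadccfp} together with the cardinality computation $|L|=\kappa$, and ideals are counted by squeezing between the upper bound $2^{|L|}=2^\kappa$ and a lower bound given by $2^\kappa$ pairwise distinct ideals built from spans of subsets of the natural basis $\set{e_\iota:|\iota|<\kappa}$. Your explicit bijection $\Id(\fla p\kappa)\cong\Sub(\vsp p\kappa)$ is a mild refinement the paper does not need (it only uses the injection $X\mapsto I'(X)$ from the power set of $\kappa$), but it does not change the substance of the argument.
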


\begin{proof} 
Since $\vsp p\kappa$
is of cardinality $\kappa$ and it has $\kappa$ many finite subsets, it has $\kappa$ many finite dimensional subspaces. 
Hence, $|\fla p\kappa|=\kappa$ and we obtain that
\begin{equation}
|K\dot+\fla p\kappa| =  |K\dot+\ula p\kappa|
=|\ula p\kappa|= |\fla p\kappa|= \kappa.
\label{eqdmhmNVT}
\end{equation}
If we add a top element to a lattice that does not have a top, then the new lattice will have only one more filter and one more ideal. Thus, in the rest of the proof of the lemma, it suffices to deal only with $K\dot+\fla p\kappa$ and $L'$.
 
For a subset $X$ of the natural basis  
$\set{e_\iota: |\iota|<\kappa}$ of $\vsp p\kappa$, let $I(X)$ be the collection of all finite dimensional subspaces of the subspace $[X]$ spanned by $X$ in $\vsp p \kappa$. Clearly, $I(X)$ is an ideal of the lattice $\fla p\kappa$ and $I'(X):=I(X)\cup K$ is an ideal of $K\dot+\fla p\kappa$. If $X$ and $Y$ are distinct subsets of the natural basis, then there is an ordinal $\iota$ such that $|\iota|<\kappa$ and $e_\iota$ belongs to exactly one of $X$ and $Y$. Let, say, $e_\iota\in X\setminus Y$. Then the one-dimensional subspace $[e_\iota]$ belongs to $I'(X)\setminus I'(Y)$. Hence, the map from the power set of $\kappa$ to $\Id(K\dot+\fla p\kappa)$ defined by $X\mapsto I'(X)$ is injective. Therefore, $K\dot+\fla p\kappa$ has 
at least $2^\kappa$ many ideals, and we conclude by \eqref{eqdmhmNVT} that it has exactly $2^\kappa$ many ideals. Similarly, in case of $L'$, distinct ideals of $\fla p\kappa$ generate distinct ideals of $L'$, whereby $|\Id(L')|=2^\kappa$. 

If $a$ and $b$ are finite dimensional subspaces of $\vsp p\kappa$ and $a<b$, then $\dim(a)<\dim(b)$. This implies easily that 
\begin{equation}
\parbox{8.4cm}{each of the lattices $L'$, $\fla p\kappa$, $\ula p\kappa$, $K\dot+\fla p\kappa$, and $K\dot+\ula p\kappa$ satisfies the Descending Chain Condition.}
\label{pbxDChCnd}
\end{equation}
This fact, Lemma~\ref{lemmadccfp}, and \eqref{eqdmhmNVT} yield that  $K\dot+\fla p\kappa$ has exactly $\kappa$ many filters. The same holds for $L'$, because $|L'|=\kappa$ by elementary cardinal arithmetics. 
This proves the first and the third parts of Lemma~\ref{lemmaImtF}. The first part implies the second part trivially, since 
we can choose $K$ to be  the singleton lattice and then $\fla p\kappa = K\dot+\fla p\kappa$ and  
$\ula p\kappa = K\dot+\ula p\kappa$.
\end{proof}

\begin{lemma}\label{lemmaConLkpp}
If  $\kappa$ is an infinite cardinal and $p$ is a prime number or $0$, then $\fla p\kappa$ 
is a relatively complemented simple lattice while $\ula p\kappa$ has exactly three congruences.
\end{lemma}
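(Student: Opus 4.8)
I would establish the three assertions---relative complementation and simplicity of $\fla p\kappa$, and the count of exactly three congruences of $\ula p\kappa$---in this order, since the statement about $\ula p\kappa$ will rest on simplicity.

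First, for relative complementation I would use that every interval of $\fla p\kappa$ is \emph{finite}: if $a\leq b$ are finite dimensional subspaces, then $[a,b]$ is isomorphic to the subspace lattice of the finite dimensional quotient space $b/a$, and its length is $\dim b-\dim a$. Given $a\leq c\leq b$, I would produce a relative complement of $c$ in $[a,b]$ by linear algebra: extend a basis of $a$ first to a basis of $c$ and then further to a basis of $b$, and let $d$ be the subspace spanned by the chosen basis of $a$ together with those basis vectors adjoined in passing from $c$ to $b$. Then $a\leq d\leq b$, $c\wedge d=a$, and $c\vee d=b$, so $d$ is as required; hence $\fla p\kappa$ is relatively complemented.

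Second, and this is the heart of the lemma, I would prove that $\fla p\kappa$ is simple. Let $\Theta$ be a congruence with $\Theta\neq\Delta$; the goal is $\Theta=\nabla$. Choosing $u<v$ with $u\mathrel\Theta v$ and using that $[u,v]$ is finite, the whole interval $[u,v]$ collapses, so some \emph{prime} interval (covering pair) $a\prec b$ satisfies $a\mathrel\Theta b$. Writing $b=a\vee[w]$ for a suitable $w\in b\setminus a$ and putting $c=[w]$, the prime intervals $[0,c]$ and $[a,b]$ are perspective, and since perspective intervals are collapsed simultaneously by any congruence, $0\mathrel\Theta c$. The key remaining point is to propagate the collapse from one atom to every atom: given atoms $p\neq q$, pick a third atom $r$ inside the two dimensional subspace $p\vee q$ (such $r$ exists because a two dimensional space over $\pfie p$ has at least three one dimensional subspaces---exactly $p+1$ of them when $p$ is a prime, and infinitely many when $p=0$), and then chase perspectivities inside the plane $p\vee q$: from $0\mathrel\Theta p$ one gets $q\mathrel\Theta p\vee q$, then $0\mathrel\Theta r$ since $[0,r]$ transposes up to $[q,p\vee q]$, then $p\mathrel\Theta p\vee q$ by joining $0\mathrel\Theta r$ with $p$, and finally $0\mathrel\Theta q$ since $[0,q]$ transposes up to $[p,p\vee q]$. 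Thus every atom collapses, hence (by perspectivity again, as in the reduction above) every prime interval collapses, and since every interval is finite this forces $x\mathrel\Theta y$ whenever $x<y$, i.e.\ $\Theta=\nabla$. I expect this propagation step---the projectivity of all prime intervals, carried out through a third atom of a plane---to be the main obstacle; the rest is routine.

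Third, for $\ula p\kappa$ I would use that $\fla p\kappa$ is a sublattice (in fact an ideal) of $\ula p\kappa$ and that $\ula p\kappa$ arises from it by adjoining a new top element $1=\vsp p\kappa$. For any congruence $\Theta$ of $\ula p\kappa$, its restriction to $\fla p\kappa$ is a congruence there, so by simplicity it is $\Delta$ or $\nabla$ on $\fla p\kappa$. If it is $\Delta$, then $1$ cannot be collapsed with any finite dimensional $a$: choosing a finite dimensional $b>a$ (possible since dimensions are unbounded) and meeting $a\mathrel\Theta 1$ with $b$ would give $a\mathrel\Theta b$ with $a\neq b$, a contradiction; hence $\Theta=\Delta$. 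If the restriction is $\nabla$, then all of $\fla p\kappa$ forms a single class and the only freedom is whether $1$ joins that class, giving $\Theta=\nabla$, or stays separate; the latter relation, whose two classes are $\fla p\kappa$ and $\set 1$, is readily checked to be compatible with meet and join and so is a genuine congruence $\Phi$. Therefore $\ula p\kappa$ has exactly the three congruences $\Delta$, $\Phi$, and $\nabla$.
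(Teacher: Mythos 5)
Your proposal is correct in substance but takes a genuinely different route from the paper for the two assertions about $\fla p\kappa$. The paper does not reprove anything about subspace lattices: it cites the classical facts that subspace lattices of \emph{finite-dimensional} vector spaces are relatively complemented and simple (Gr\"atzer~\cite[Theorem 392]{ggfoundbook}, Wehrung~\cite{wehrungchptr}) and then invokes \eqref{pbxDirUn}, the representation of $\fla p\kappa$ as a directed union of copies of the lattices $\fla p n$, together with the observation that both properties are preserved by directed unions. You instead prove everything from scratch inside $\fla p\kappa$: relative complements by extending bases, and simplicity by collapsing one atom via a transposition and then propagating the collapse to every atom through a third atom of a plane. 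Your argument is self-contained and elementary, at the cost of redoing classical material; the paper's is shorter but rests on references and on the directed-union preservation argument, which your route avoids entirely. For $\ula p\kappa$ the two proofs essentially coincide: both restrict a congruence to the simple ideal $\fla p\kappa$, both recognize the equivalence with blocks $\fla p\kappa$ and $\set{1_{\ula p\kappa}}$ as a congruence (your ``readily checked'' compatibility is exactly what the paper extracts from \eqref{pbxToPirr}, the join irreducibility of the top), and both rule out a collapse of the top with a finite-dimensional element by producing a strictly larger finite-dimensional element---you do it by meeting with $b$, the paper by convexity of congruence blocks; these are the same computation.

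One claim you make is false and should be repaired: ``every interval of $\fla p\kappa$ is finite'' fails for $p=0$, since $\pfie 0=\mathbb Q$ and a two-dimensional $\mathbb Q$-space has infinitely many one-dimensional subspaces---a fact you yourself invoke later when producing the third atom $r$, so your text is internally inconsistent on this point. Fortunately, nothing in your argument needs finiteness of intervals, only that they have \emph{finite length} $\dim b-\dim a$: the collapse of all of $[u,v]$ already follows from convexity of congruence blocks; the existence of a covering pair inside $[u,v]$, and the final step (every prime interval collapses, hence $x\mathrel\Theta y$ for all $x<y$ via a finite maximal chain of coverings) follow from finite length alone. With ``finite'' replaced by ``of finite length'' in the places where it occurs, your proof is correct as written.
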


Note that  $\ula p\kappa$ above  is neither complemented,
nor relatively complemented, because
\begin{equation}
\text{the top element of $\ula p\kappa$ for an infinite $\kappa$ is join irreducible.} 
\label{pbxToPirr}
\end{equation}
The smallest congruence and the largest congruence on a lattice $M$  will be denoted by $\Delta=\deltaell M$ and $\nabla=\nablaell M$, respectively.

\begin{proof}[Proof of Lemma~\ref{lemmaConLkpp}]
The subspace lattices of finite dimensional vector spaces are well known to be relatively complemented and simple; see, for example, Gr\"atzer~\cite[Theorem 392]{ggfoundbook} and
Wehrung~\cite[after Definition 7-5.1]{wehrungchptr}. Both properties are preserved by directed unions, whereby \eqref{pbxDirUn} yields the first part of the lemma. We conclude from \eqref{pbxToPirr} that
the equivalence $\Psi$ with blocks $\fla p\kappa$ and the singleton set $\set{1_{\ula p\kappa}}$ is a congruence on $\ula p\kappa$.  Let $\Theta\in \Con(\ula p\kappa)\setminus\set{\deltaell{\ula p\kappa},\Psi}$; we need to show that $\Theta=\nablaell{\ula p\kappa}$. 
First, assume that $\Theta$ collapses two distinct elements of $\fla p\kappa$. 
Then the restriction of $\Theta$ to the simple lattice  $\fla p\kappa$ is $\nablaell{\fla p\kappa}$, whereby $\Theta\geq \Psi$. Hence $\Theta =\nablaell{\ula p\kappa}$, because
$\nablaell{\ula p\kappa}$ is the only equivalence on $\ula p\kappa$ strictly above $\Psi$. Second, assume that $\tuple{x,1_{\ula p\kappa}}\in \Theta$ for some $x\in \fla p\kappa$. Using that $\fla p\kappa$ has no largest element and that the $\Theta$-block of $1_{\ula p\kappa}$ is a convex sublattice, 
we obtain an element $y\in \fla p\kappa$ such that $x<y$ and $\pair x y\in \Theta$. Thus, the first case applies, completing the proof of Lemma~\ref{lemmaConLkpp}.
\end{proof}

\begin{proof}[Proof of Theorem~\ref{thmscnd}]
Recall that 
a lattice is \emph{automorphism-rigid} if it has only one automorphism, the identity map. Remember that $\kappa=\set{\iota: |\iota|<\kappa}$. It is clear from Lemma 2.8 of
Cz\'edli~\cite{czgautprinc} and the construction 
used in its proof that  $|S(\iota)|\leq \max(\aleph_0,|G|)\leq \kappa$ for every  $\iota\in \kappa$. This fact and what we need from Lemma 2.8 of \cite{czgautprinc} can be summarized as follows: for every $\iota, \iota_1,\iota_2\in \kappa$,
\begin{equation}
\parbox{9.1cm}{
$S(\iota)$ is an automorphism-rigid and simple lattice of length 12, $|S(\iota)|\leq \kappa$, and $S(\iota_1)\cong S(\iota_2)$ implies that $\iota_1=\iota_2$.}
\label{pbxLmVnknN}
\end{equation}
Since every finite-dimensional subspace is the join of finitely many one-dimensional subspaces, each element of $\fla p\kappa$ is the join of finitely many atoms. This implies that every automorphism $\phi$ of $\fla p\kappa$ is determined by the restriction of $\phi$ to the set of atoms. For each atom $a$ of $\fla p\kappa$, choose an $\iota_a\in\kappa$ so that for distinct atoms $a$ and $b$ the  ordinals $\iota_a$ and $\iota_b$ should be distinct. 
Let $L'$ be the lattice we obtain from $\fla p\kappa$ by replacing the interval $[0,a]$ by $S(\iota_a)$ for each atom $a$ of $\fla p\kappa$. We claim that 
\begin{equation}
\text{$L'$ is automorphism-rigid.}
\label{txtLprmrigid}
\end{equation}
Recall that the \emph{height} and the \emph{depth} of an element $x$ in a bounded lattice is the length of the ideal $\ideal x=[0,x]$ and that of the filter $\filter x=[x,1]$, respectively. Let $\phi\colon L'\to L'$ be an automorphism. It preserves the heights of elements and the atoms of $\fla p\kappa$ are exactly the elements of height 12 in $L'$. 
Hence, $\phi$ maps the atoms of $\fla p\kappa$ to atoms of $\fla p\kappa$. 
Thus, if $a$ is an arbitrary atom of $\fla p\kappa$, then so is $\phi(a)$ and   
\begin{equation}
\phi(S(\iota_a))=\phi(\ideal_{L'}\, a) =   \ideal_{L'}\, \phi(a)
= S(\iota_{\phi(a)}).
\label{eqcTzGfmBqCPgZ}
\end{equation}
Hence, $S(\iota_a)\cong S(\iota_{\phi(a)})$, whereby \eqref{pbxLmVnknN} implies that $\phi(a)=a$ for all atoms of $\fla p\kappa$. That is, the restriction of $\phi$ to the set of atoms of $\fla p\kappa$ is the identity map. Since this  restriction determines the action of the lattice isomorphism  $\phi$ on $\fla p\kappa$, $\phi$ acts identically on $\fla p\kappa$.  
Furthermore, for all atoms $a$ of $\fla p\kappa$, \eqref{eqcTzGfmBqCPgZ} turns into $\phi(S(\iota_a))= S(\iota_a)$,  and so  the automorphism-rigidity part of \eqref{pbxLmVnknN} yields that $\phi$ acts identically on $S(\iota_a)$. 
Therefore, \eqref{txtLprmrigid} holds.

By  Cz\'edli~\cite[Theorem 1.1]{czgautprinc}, there exists a selfdual lattice $H$ such that
\begin{equation}
\parbox{7.5cm}{$\Aut(H)\cong G$, $H$ is of length 16, $\Princ(H)\cong P$, and if both $P$ and $G$ are finite, then so is $H$.}
\label{pbxdPdzrGjsBmQ}
\end{equation}
Moreover, the construction given in \cite{czgautprinc} makes it clear that
\begin{equation}
\parbox{9.7cm}{If $\Theta\in \Con(H)$ such that 
the $\Theta$-block of $0_H$ is distinct from the singleton $\set{0_H}$, then $\Theta=\nablaell H$.
Furthermore, $|H|\leq \kappa$ and  each principal congruence on $H$ is generated by a prime interval.}
\label{pbxlnLpBTp}
\end{equation}

\begin{figure}[ht] 
\centerline
{\includegraphics[scale=1.0]{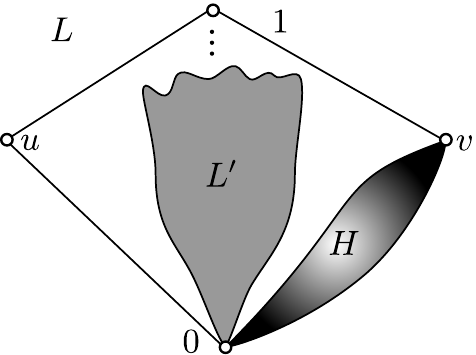}}
\caption{The lattice $L$ for Theorem~\ref{thmscnd}}
\label{figone}
\end{figure}

Now, we   define a lattice $L$ as follows;  see Figure~\ref{figone}. First, we add a top element and two additional elements, $u$ and $v$, to $L'$  such that $u$ and $v$ are complements of each other and of any nonzero element of $L'$. In the figure, $L'$ is indicated by uniform grey color. In the next step, replace the prime interval $[0,v]$ by $H$, identifying $0_H$ and $1_H$ with $0$ and $v$, respectively. In the figure, $H$ is radially colored. The lattice we have just described is $L$.
Note that $L'$ and $H$ are ideals of $L$ and every element of $H\setminus\set 0$ is a complement of every nonzero element of $L'\cup\set u$.

We will implicitly use the well-known fact that every congruence is determined by the pairs of \emph{comparable} distinct elements it collapses.
Let $\phi\colon\Con(L)\to \Con(H)$ be the restriction map defined by $\phi(\Theta):=\Theta\cap(H\times H)$; we claim that this map is bijective. 
First, we show that, for every $\Theta\in\Con(L)$,  
\begin{equation}
\parbox{6.4cm}{if $\Theta$ collapses a pair
$\pair x y\in L^2$ such that $x < y$ and
$\set{x,y}\nsubseteq H$, then $\Theta=\nablaell L$.}
\label{pbxcWpmlPwRbkVh}
\end{equation}
As a tool that we need for this, note that 
\begin{equation}
\parbox{6.1cm}{for every $z\in L'\setminus\set{0}$, the subset $\set{0,1,u,v,z}$
is a simple sublattice of $L$,
}
\label{pbxcSmlPsTMhr}
\end{equation}
since this sublattice is isomorphic to $M_3$.
We know from \eqref{pbxLmVnknN} that the lattices $S(\iota)$ used in the definition of $L'$ are simple. Hence, it follows from Lemma~\ref{lemmaConLkpp} that $L'$ is a simple lattice. So if $x,y\in L'\cup\set{u}$, then the simplicity of $L'$ gives that $\pair 0 z\in\Theta$, whence \eqref{pbxcSmlPsTMhr} yields that $\pair 0 1\in\Theta$, implying that $\Theta=\nablaell L$,
as required. The only remaining case is $x\leq v<y=1$; then $\pair v 1\in \Theta$ since the $\Theta$-blocks are convex. Thus, we obtain from
 \eqref{pbxcSmlPsTMhr} that  $\pair 0 1\in\Theta$, whereby $\Theta=\nablaell L$, proving \eqref{pbxcWpmlPwRbkVh}. Next, we claim that, for every $\Theta\in\Con(L)$, 
\begin{equation}
\phi(\Theta)=\deltaell H\quad\text{if and only if}\quad \Theta=\deltaell L.
\label{eqdhtZrsmP}
\end{equation}
The ``if'' part is obvious. Conversely, assume that $\phi(\Theta)=\deltaell H$ and $x<y$ in $L$. If $\set{x,y}\subseteq H$, then $\pair x y\notin \Theta$ since $\phi(\Theta)=\deltaell H$. If $\set{x,y}\nsubseteq H$, then  \eqref{pbxcWpmlPwRbkVh} gives that $\pair x y\notin\Theta$. Hence, $\Theta=\deltaell L$, proving \eqref{eqdhtZrsmP}.
Now, for the sake of contradiction, suppose that 
$\Theta_1, \Theta_2\in \Con(L)$ are distinct but $\phi(\Theta_1)=\phi(\Theta_2)$. We know from \eqref{eqdhtZrsmP} that none of $\Theta_1$ and $\Theta_2$ is $\deltaell L$. 
If one of them, say $\Theta_1$, collapses a pair described in  \eqref{pbxcWpmlPwRbkVh}, then $\Theta_1=\nablaell L$, so $\Theta_1$ and also $\phi(\Theta_2)=\phi(\Theta_1)$ collapses 
$\pair 0 v$, whence $\pair 0 v\in \Theta_2$ and \eqref{pbxcSmlPsTMhr} lead to $\Theta_2=\nablaell L =\Theta_1$, which is a contradiction. So none of $\Theta_1$ and $\Theta_2$ 
collapses a pair described in \eqref{pbxcWpmlPwRbkVh}. Hence, for every pair $\pair x y$ with $x<y$, if $\Theta_1$ or $\Theta_2$ collapses $\pair x y$, then $\pair x y\in H^2$. 
Using that the restrictions $\phi(\Theta_1)$ and $\phi(\Theta_2)$ coincide, it follows that $\Theta_1$ and $\Theta_2$ collapse the same pairs. Hence $\Theta_1=\Theta_2$,  contradicting our assumption and proving that $\phi$ is an injective~map.

Clearly, $\phi$ is order-preserving. If $\Psi\in\Con(H)\setminus \set{\nablaell H}$, then it follows easily from the first part of \eqref{pbxlnLpBTp} that $\Theta:=\deltaell L \cup \Psi$ is a congruence on $L$. Since its restriction to $H$ is $\Psi$, we have that $\Theta=\phi^{-1}(\Psi)$, while $\phi^{-1}(\nablaell H)=\nablaell L$ is clear. Thus, $\phi$ is a surjective map. Since $\phi^{-1}$ described above is order-preserving, we conclude that $\phi$ is a lattice isomorphism.
It is clear from \eqref{pbxcWpmlPwRbkVh} that if a comparable pair of elements generates a congruence distinct from the trivial congruences, then this pair is from $H^2$. Therefore,  the restriction of $\phi$ to $\Princ(L)$ is an order isomorphism from $\Princ(L)$ to $\Princ(H)$. Thus, \eqref{pbxdPdzrGjsBmQ} gives that $\Princ(L)\cong\Princ (H)\cong P$, proving part \eqref{thmscndb} of Theorem~\ref{thmscnd}.

Since $\phi$ is an isomorphism, the description of its inverse above, 
\eqref{pbxlnLpBTp}, and \eqref{pbxcWpmlPwRbkVh} imply that every principal congruence of $L$ is generated by a prime interval, in fact, by a prime interval of $H$. This proves  part \eqref{thmscndc} of Theorem~\ref{thmscnd}.

Next, denote the sublattice $L\setminus \set u$ by $L^{-u}$. Since $u$ is the only atom of $L$ that is also a coatom, $\psi(u)=u$ for every $\psi\in\Aut(L)$. Hence, the restriction from $L$ to $L^{-u}$ gives a group isomorphism from $\Aut(L)$ onto $\Aut(L^{-u})$.
So it suffices to focus only on $L^{-u}$.  By \eqref{pbxdPdzrGjsBmQ} and the sentence right above Lemma~\ref{lemmageneratVp} , $H=\{x\in L^{-u}: \depth(x)\leq 16$ or $x=0\}$. 
Similarly, $L'=\{x\in L^{-u}: \depth(x)\leq 16$ fails$\}$.
These first-order characterizations show that both $H$ and $L'$ are closed with respect to every $\psi\in\Aut(L)$. Thus, for $\psi\in\Aut(L)$, $\psi$ is determined by its restriction $\psi'$ to $L'$ and its restriction $\psi_H$ to $H$. But $\psi'$ is the identity map by \eqref{txtLprmrigid}, and it follows that the map $\Aut(L^{-u})\to \Aut(H)$, defined by $\psi\mapsto \psi_H$, is a group isomorphism. Hence,  \eqref{pbxdPdzrGjsBmQ} yields that $\Aut(L)\cong\Aut(L^{-u})\cong\Aut(H)\cong G$, proving part \eqref{thmscndd} of Theorem~\ref{thmscnd}.

Since $H$ is of length 16,  \eqref{pbxDChCnd} remains valid for $L$ instead of $\fla p\kappa$, that is, $L$ satisfies the Descending Chain Condition.  Elementary cardinal arithmetics based on \eqref{eqdmhmNVT} and $|H|\leq\kappa$ from \eqref{pbxlnLpBTp} shows that $|L|=\kappa$. Hence, it follows from Lemma~\ref{lemmadccfp} that $L$ has exactly $\kappa$ many filters. Every  ideal of $\fla p \kappa$ is an ideal of $L'$ and thus  of $L$, whereby Lemma~\ref{lemmaImtF} yields that $L$ has $2^\kappa$ many ideals. This yields  \eqref{thmscnda} and completes the proof of Theorem~\ref{thmscnd}.
\end{proof}

For a lattice $M$, $J(M)$ denotes the ordered set of nonzero join-irreducible elements of $M$.

\begin{proof}[Proof of Observation~\ref{remsdLdaut}]
Let $P=J(D)$. 
Take the \emph{finite selfdual} lattice $H$ from \eqref{pbxdPdzrGjsBmQ} and \eqref{pbxlnLpBTp}. 
It is well known from, say, Gr\"atzer~\cite[Page 39]{ggCFL2} that $\Theta\in J(\Con(H))$ iff $\Theta$ is generated by a prime interval. Hence, the second half of \eqref{pbxlnLpBTp} yields that  $J(\Con(H))=\Princ(H)\cong P=J(D)$. Thus, the structure theorem of finite distributive lattices, see Gr\"atzer~\cite[Corollary 2.15]{ggCFL2}, gives that $\Con(H)=D$. Letting $L:=H$, the rest follows from the choice of $H$.
\end{proof}

\begin{proof}[Proof of Corollary~\ref{corolsdhhGrt}]
Let $P:=J(D)$ and take $L$ from Theorem~\ref{thmscnd}. We have that $G\cong\Aut(L)$, $|\Id(L)|=2^\kappa$, $|\Filt(L)|= \kappa$, and $\Princ(L)\cong P$. Every congruence is the join of principal congruences, whereby $\Con(L)$ is finite since there are $|P|$, that is finitely many,  principal congruences. The same fact implies $J(\Con(L))\subseteq\Princ(L)$, since the above-mentioned join for a join-irreducible congruence contains only a single joinand. 
Assume that $\Theta\in\Princ(L)$ and $\Theta=\Psi_0\vee \Psi_1$ for $\Psi_1,\Psi_2\in\Con(L)$. By Theorem~\ref{thmscnd}\eqref{thmscndc}, $\Theta$ is generated by a pair $\pair a b\in L^2$ such that $a\prec b$. By the well-known description of joins, see Gr\"atzer~\cite[Theorem 1.2]{ggCFL2}, there is an $n\in\nnn$ and there are $x_0,x_1,\dots, x_n\in L$ such that $a=x_0\prec x_1\prec\dots\prec x_n=b$ and 
$\pair{x_{i-1}}{x_i}\in \Psi_0\cup\Psi_1$. 
Since $a\prec b$, we have that $n=1$ and $\pair a b=\pair {x_0}{x_1}\in \Psi_0\cup\Psi_1$, whereby $\Theta\leq \Psi_0$ or $\Theta\leq \Psi_1$. This implies that $\Theta\in J(\Con(L))$, so
$J(\Con(L))=\Princ(L)\cong P$, and we obtain 
$\Con(L)\cong D$ from the structure theorem of finite distributive lattices as in the previous proof. 
\end{proof}

\begin{lemma}\label{lemmagldsM}
Let $L_0$ and $L_1$ be as in the paragraph right before Lemma~\ref{lemmaImtF}.
Then $|\Con(L_0\dot+L_1)| = |\Con(L_0)|\cdot |\Con(L_1)|$. Also, a lattice identity holds in 
$L_0\dot+L_1$ iff it holds both in $L_0$ and in $L_1$.
\end{lemma}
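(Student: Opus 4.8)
The plan is to show that the restriction map yields a lattice isomorphism $\Con(L_0\dot+ L_1)\cong \Con(L_0)\times\Con(L_1)$, which gives the first assertion, and then to exhibit $L_0\dot+ L_1$ as a subdirect product of $L_0$ and $L_1$, which gives the second. Throughout I write $g:=1_{L_0}=0_{L_1}$ for the gluing element. The structural facts I would record first are that $L_0$ and $L_1$ are sublattices of $L_0\dot+ L_1$, that every element of $L_0$ lies below every element of $L_1$, and hence that for $a\in L_0$ and $b\in L_1$ the mixed operations degenerate, namely $a\wedge b=a$ and $a\vee b=b$.

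For the first assertion I would consider the map $\Theta\mapsto\pair{\Theta\cap L_0^2}{\Theta\cap L_1^2}$ from $\Con(L_0\dot+ L_1)$ to $\Con(L_0)\times\Con(L_1)$. Injectivity is easy: a congruence is determined by the comparable pairs it collapses, and if $\Theta$ collapses a mixed comparable pair $\pair a b$ with $a\in L_0$ and $b\in L_1$, then $a\le g\le b$ together with the convexity of the $\Theta$-blocks forces $\pair a g,\pair g b\in\Theta$, so the mixed pair is already recorded in the two restrictions. For surjectivity I would, given $\Theta_0\in\Con(L_0)$ and $\Theta_1\in\Con(L_1)$, define a relation $\Theta$ on $L_0\dot+ L_1$ agreeing with $\Theta_0$ on $L_0^2$, with $\Theta_1$ on $L_1^2$, and declaring a mixed pair $\pair a b$ (with $a\in L_0$, $b\in L_1$) to belong to $\Theta$ exactly when $a\mathrel{\Theta_0}g$ and $g\mathrel{\Theta_1}b$. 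The main obstacle of the whole lemma lies here: verifying that this $\Theta$ is a congruence. Transitivity follows by routing every chain through $g$, and the substitution property reduces, via the degeneracy $a\wedge b=a$ and $a\vee b=b$ together with $g\wedge z=z$ for $z\in L_0$ and $g\wedge z=g$ for $z\in L_1$, to the compatibility of $\Theta_0$ and $\Theta_1$ with the operations $x\mapsto x\wedge g$ and $x\mapsto x\vee g$; I expect this to be the only step requiring genuine case analysis. Since this $\Theta$ restricts back to $\pair{\Theta_0}{\Theta_1}$, the map is onto, and being a monotone bijection it is a lattice isomorphism, whence $|\Con(L_0\dot+ L_1)|=|\Con(L_0)|\cdot|\Con(L_1)|$.

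For the second assertion one inclusion is immediate: since $L_0$ and $L_1$ are sublattices of $L_0\dot+ L_1$, every identity holding in the glued sum holds in each of them. For the converse I would use the two congruences supplied by the isomorphism above, namely $\rho_0$ corresponding to $\pair{\deltaell{L_0}}{\nablaell{L_1}}$ and $\rho_1$ corresponding to $\pair{\nablaell{L_0}}{\deltaell{L_1}}$; concretely $\rho_0$ collapses $L_1$ to the single block containing $g$ while keeping $L_0$ discrete, and $\rho_1$ does the reverse. Then $(L_0\dot+ L_1)/\rho_0\cong L_0$ and $(L_0\dot+ L_1)/\rho_1\cong L_1$, while a one-line check gives $\rho_0\cap\rho_1=\deltaell{L_0\dot+ L_1}$, since two distinct elements related by both congruences would have to lie in $L_0\cap L_1=\set g$. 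Hence $L_0\dot+ L_1$ embeds subdirectly into $L_0\times L_1$. An identity that holds in $L_0$ and in $L_1$ holds in the direct product $L_0\times L_1$ and therefore in the subalgebra $L_0\dot+ L_1$, completing the proof.
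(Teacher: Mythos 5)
Your proposal is correct and follows essentially the same route as the paper: the paper also treats the first claim via the observation that a congruence of $L_0\dot+L_1$ is determined by its two restrictions, which can be prescribed arbitrarily, and proves the second claim by exactly your subdirect-product argument with the two congruences $\pair{\deltaell{L_0}}{\nablaell{L_1}}$ and $\pair{\nablaell{L_0}}{\deltaell{L_1}}$. The only difference is that you spell out the surjectivity verification that the paper dismisses as straightforward (and note that, for the cardinality statement, bijectivity of the restriction map already suffices; to call it a lattice isomorphism you should also observe that the inverse you construct is order-preserving).
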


\begin{proof}
The required equality follows from the straightforward fact that every $\Theta\in\Con(L_0\dot+ L_1)$ is determined by its restriction to $L_0$ and $L_1$, and these restrictions can be chosen arbitrarily. For $i\in\set{0,1}$, let $\Psi_i$ be the congruence of $L_0\dot+ L_1$ whose restriction to $L_i$ and $L_{1-i}$ are 
$\deltaell{L_i}$ and $\nablaell{L_{1-i}}$, respectively. Then $(L_0\dot+L_1)/\Psi_i\cong L_{i}$, for $i\in\set{0,1}$, and $\Psi_0\cap\Psi_1=\deltaell{L_0\dot+L_1}$. Hence, $L_0\dot+ L_1$ is (isomorphic to) a subdirect product of $L_0$ and $L_1$, and the second part of the lemma follows easily.
\end{proof}

Next, we prove our first theorem.

\begin{figure}[ht] 
\centerline
{\includegraphics[scale=1.0]{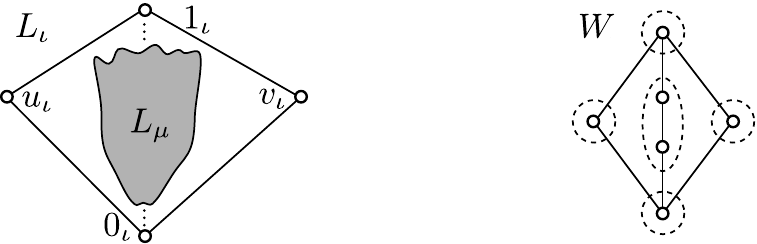}}
\caption{$L_\iota=L_{\mu+1}$ and the only nontrivial congruence on $W$}
\label{figtwo}
\end{figure}

\begin{proof}[Proof of Theorem~\ref{thmmain}]
We are going to give two different proofs for part \eqref{thmmaina}. The first one uses  Theorem~\ref{thmscnd}
and so relies on outer references. The second 
 one is self-contained modulo the paper.

For the first proof of part \eqref{thmmaina}, take a dually well-ordered bounded chain $P$ such that $|P|=\lambda$. Let $G$ be the one-element group, and take the lattice $L$ provided by Theorem~\ref{thmscnd}. It suffices to show that $|\Con(L)|=\lambda$.  Every $\Psi\in\Con(L)$ is a join of principal congruences, whence $\Psi$ is the join of the set $\set{\Theta: \Theta \in\Princ(L)\text{ and }\Theta\leq \Psi}$. Containing $\deltaell L$, this set is nonempty. Thus, $\Princ(L)\cong P$ implies that this set has a largest element $\Theta_0$. The join above is clearly $\Theta_0$, whence $\Psi=\Theta_0\in\Princ(L)$. Hence, $\Con(L)\subseteq \Princ(L)$, which gives that $\Princ(L)=\Con(L)$. Thus, $|\Con(L)|=|\Princ(L)|=|P|=\lambda$, completing the first proof of part \eqref{thmmaina}.

\begin{figure}[ht] 
\centerline
{\includegraphics[scale=1.0]{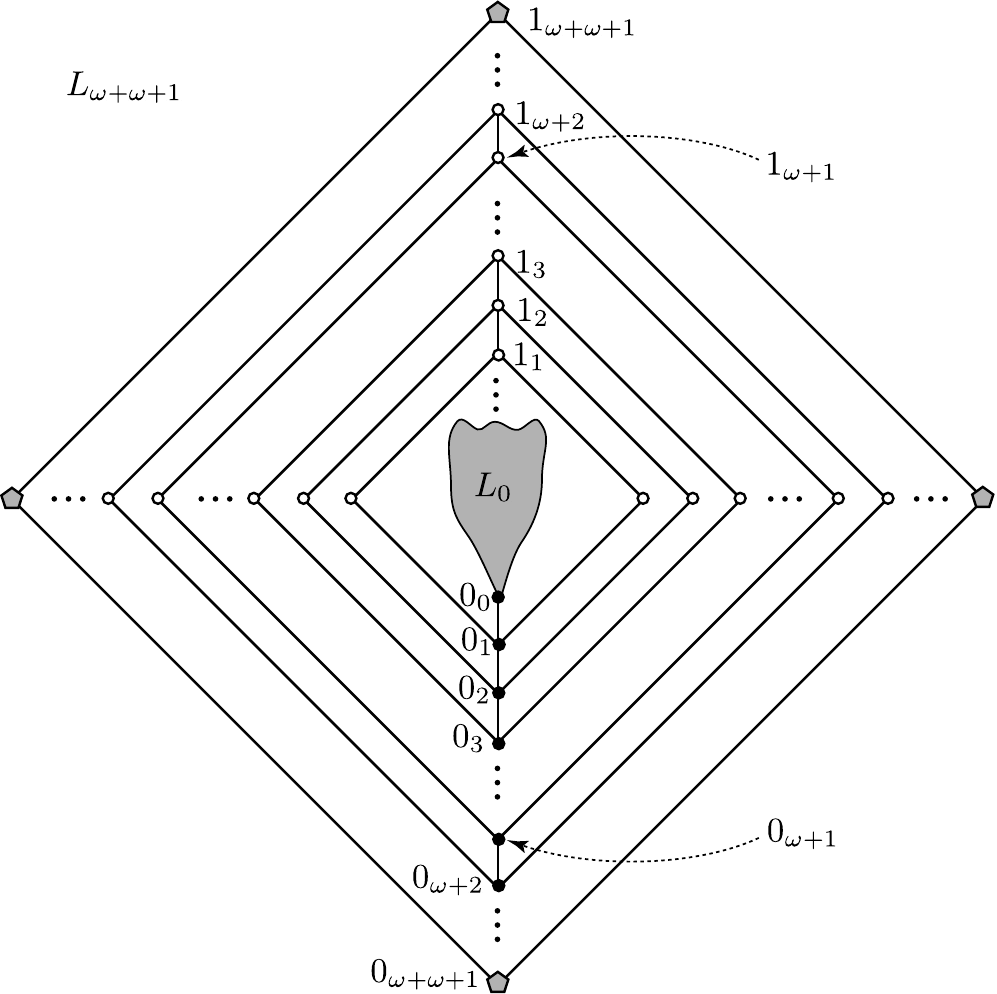}}
\caption{$L_{\omega+\omega+1}$, $C=\{$black-filled elements$\}$,  and $L_{\omega+\omega}=L_{\omega+\omega+1}\setminus\{$grey-filled pentagon-shaped elements$\}$}
\label{figthree}
\end{figure}

For the second proof of part \eqref{thmmaina}, 
let $\tau$ be the smallest ordinal with cardinality $\lambda$. We define a sequence  $\tuple{L_\iota: \iota\leq\tau}$ of lattices by induction as follows. We let $L_0=\fla p\kappa$. 
If $\iota$ is a successive ordinal of the (unique) form $\iota=\mu+1$,  then we obtain $L_\iota$ from $L_\mu$ by adding a new bottom and a new top first, and adding two incomparable new elements, $u_\iota$ and $v_\iota$, that are atoms and also coatoms; see Figure~\ref{figtwo} on the left and, for $\iota=\omega+\omega+1$ (and even for $\iota=\omega+\omega$), see Figure~\ref{figthree}. Note that a triple of three dots in Figure~\ref{figtwo} stands for an edge if $L_\mu$ is bounded but this is not always so; for example, neither $L_\omega$,  nor $L_{\omega+\omega}$ is bounded. In Figure~\ref{figthree}, the triples of three dots always stand for repetition.    If $\iota$ is a limit ordinal, then let $L_\iota$ be the union of the lattices $L_\mu$ for $\mu<\iota$; it is a lattice since we have formed a directed union.
We claim that $L:=L_\tau\,$ CFI-represents $\tuple{\lambda,\kappa,2^\kappa}$. It suffices to show by induction on $\iota$ that for all $\iota\leq \tau$, the condition 
\[
H(\iota):\qquad 
\parbox{6.8cm}{$L_\iota$  CFI-represents $\tuple{2+|\iota|,\kappa,2^\kappa}$, and every $\Theta\in\Con(L_\iota)\setminus\set{ \deltaell{L_\iota} }$
has exactly one non-singleton block, which is $L_\mu$ for some $\mu\leq\iota$}
\]
holds.
The validity of $H(0)$ follows from Lemmas~\ref{lemmaImtF} and \ref{lemmaConLkpp}. For the induction step, assume that $0<\iota$ and $H(\mu)$ holds for all $\mu$ that are less than $\iota$; we need to show the validity of $H(\iota)$. Depending on $\iota$, there are two cases. First, let 
 $\iota$ be a successor ordinal with $\iota=\mu+1$. Then, 
based on the congruence structure  of the lattice $W$ in Figure~\ref{figtwo}, the description of the congruences of $L_\iota$ follows easily, and we obtain that 
$|\Con(L_\iota)|= 1+|\Con(L_\mu)|=1+2+|\mu|=2+|\iota|$. 
Second,  let $\iota$ be a limit ordinal. 
For a nontrivial congruence $\Theta$ of $L_\iota$, that is, for    $\Theta\in\Con(L_\iota)\setminus\set{\deltaell{L_\iota}, \nablaell{L_\iota}}$, there exists a $\mu_0<\iota$ such that the restriction of $\Theta$ to $\mu_0$ is nontrivial. The induction hypothesis $H(\mu_0)$ yields a $\mu$, and it is straightforward to show that this $L_\mu$ is the only non-singleton block of $\Theta$. After having described the nontrivial congruences of $L_\iota$ by means of these ordinals $\mu$, we obtain $|\Con(L_\iota)|=|\iota|=2+|\iota|$ again.

It is straightforward to see that $|L_\iota|=\kappa$ and distinct ideals of $\fla p\kappa$ generate distinct ideals of $L_\iota$. These facts and Lemma~\ref{lemmaImtF} yield that $L_\iota$ has exactly $2^\kappa$ many ideals. Next, we generalize Lemma~\ref{lemmadccfp} as follows. 
\begin{equation}
\parbox{9.4cm}{Let $I$ be an ideal of a lattice $K$ and assume that whenever $x_0>x_1>\dots>x_n>x_{n+1}>\dots $ with $x_i\in K$ for all $n\in\nnn$, then  $x_n\in I$ for all but finitely many $n\in \nnn$. Then every non-principal filter of $K$ is generated by a filter of $I$.}
\label{pbxGnwgrlMmkg}
\end{equation}
Note that, by letting $I=\set{0}$, Lemma~\ref{lemmadccfp}  follows easily from \eqref{pbxGnwgrlMmkg}. In order to show \eqref{pbxGnwgrlMmkg}, let $F$ be a non-principal filter of $K$. For the sake of contradiction, suppose that $F\cap I=\emptyset$. Since $M$ in the proof of Lemma~\ref{lemmadccfp} is clearly a subset of $F$, $M\cap I=\emptyset$. Hence, there is no infinite strictly descending sequence in $M$, whereby $M$ has a minimal element and we conclude from \eqref{pbxMhsMlmNt} that $F$ is a principal filter of $K$. This is a contradiction and we obtain that $T:=F\cap I$ is nonempty. Clearly, $T$ is a filter of $I$, so we need to show only that $\Kfilter  T=F$. Fix a $t_0\in T$ and let $f$ be an arbitrary element of $F$. Since $t_0\wedge f\in F\cap I=T$ and 
$f\geq t_0\wedge f$, we have that $f\in \Kfilter T$. Hence, $F\subseteq \Kfilter T$ while the converse inclusion is trivial by $T\subseteq F$. Consequently, \eqref{pbxGnwgrlMmkg} holds.

Armed with \eqref{pbxGnwgrlMmkg}, let $C:=\set{0_\mu: \mu<\iota}$, where  $0_\mu$ denotes the bottom of $L_\mu$; see the set of the black-filled elements in Figure~\ref{figthree} for $\iota=\omega+\omega$. Note that $C$ is a chain and it witnesses that the Descending Chain Condition fails in $L_\iota$ in general. Observe that the chain $C$ is an ideal of $L_\iota$ and the assumptions of \eqref{pbxGnwgrlMmkg} hold for $\tuple{L_\iota,C}$ in place of $\tuple{K,I}$. Hence, $|\Filt(L_\iota)|  \leq |L_\iota|+|\Filt(C)|$. 
We have that $0_\mu < 0_{\mu'}$ iff $\mu>\mu'$. Hence, the chain $C$ is dually isomorphic to the chain $C':=\set{\mu: \mu<\iota}$ of ordinals, and so $|\Filt(C)|=|\Id(C')|$. Every proper ideal $J$ of $C'$ is uniquely determined by the smallest ordinal in $C'\setminus J$, whence it follows that $C'$ has at most $|C'|=|\iota|\leq \kappa$ many ideals. This fact, $|L_\iota|=\kappa$, and $|\Filt(L_\iota)|  \leq |L_\iota|+|\Id(C')|$ imply that $|\Filt(L_\iota)| =\kappa$. 
This completes the induction step and the second proof of 
part~\eqref{thmmaina}.

In order to prove \eqref{thmmainb}, let $L$ be the sublattice $\{x\in \nnn^\kappa: x(\iota)=0$ for all but finitely many $\iota$ with $|\iota|<\kappa\}$ of the $\kappa$-th direct power of  $\nnn=\tuple{\nnn;\leq}$. As a sublattice of a distributive lattice, $L$ is distributive.  For $X\subseteq \kappa$, we
let $\Theta(X)=\set{\pair x y\in L^2: x(\iota)= y(\iota) \text{ for all }\iota\in X}$ and $I(X)=\set{x(\iota)= 0\text{ for all }\iota\in X}$.
Using that $\Theta(X)\in\Con(L)$, $I(X)\in \Id(L)$, and $X_1\neq X_2$ implies that $\Theta(X_1)\neq\Theta(X_2)$ and $I(X_1)\neq I(X_2)$, we obtain
that $|\Con(L)|=|\Id(L)|=2^\kappa$. Since $L$ satisfies the Descending Chain Condition, Lemma~\ref{lemmadccfp} gives that $|\Filt(L)|=\kappa$. This proves part~\eqref{thmmainb}.

Finally, we turn our attention to part  \eqref{thmmainc}. If $\pair  m n=\pair 1 0$, then Lemma~\ref{lemmaImtF}  yields that
$\fla p\kappa$ has $2^\kappa$ many ideals but only $\kappa$ many filters.
It is a relatively complemented simple lattice by Lemma~\ref{lemmaConLkpp} and it generates $\fvar p$ by Lemma~\ref{lemmageneratVp}.
Hence, the lattice  $L(p,2,\kappa):=\fla p\kappa$ satisfies
the requirements of \eqref{thmmainc}. 
Similarly, for $\pair  m n = \pair 0 1$, we let  
$L(p,3,\kappa):=\ula p\kappa$, which has $2^\kappa$ many ideals but only $\kappa$ many filters by Lemma~\ref{lemmaImtF}. This lattice generates $\fvar p$ by Lemma~\ref{lemmageneratVp}
and it has exactly three congruences by Lemma~\ref{lemmaConLkpp}, so  part \eqref{thmmainc}  
holds for $\pair m n= \pair 0 1$. 

We cannot apply Lemma~\ref{lemmaImtF}
to the undefined sum $\ula p\kappa\dot+\ula p\kappa$, since $\ula p\kappa$ has no largest element. Instead of recalling a more general concept of sums from Cz\'edli~\cite{czgtwodistb} and giving the easy generalization of Lemma~\ref{lemmaImtF} for it, it is more economic to take the $2^{m-1}$-element boolean lattice $B_{m-1}$; it will be needed only for $2\leq m\in\nnp$.
 It belongs to the folklore and we know also from Crawley~\cite[Theorem 3.2]{crawley} that $\Con(B_{m-1})\cong B_{m-1}$. 
Hence, $B_{m-1}$ has exactly $2^{m-1}$ congruences. 
Interrupting the proof, note that $\Con(M)$ is a boolean lattice and $|\Con(M)|$ is a power of 2  for \emph{every} finite modular lattice $M$; see, e.g., Gr\"atzer~\cite[Corollary 249 and Theorem 282]{ggfoundbook}.

Now, for the general case, we let
\begin{equation}
L(p,2^m\cdot 3^n,\kappa)=\underbrace{L(p,3,\kappa)\dot+\dots\dot+L(p,3,\kappa)}_{n\text{ copies}}\dot+ B_{m-1}\dot+
L(p,2,\kappa),
\label{eqdHdhBmT}
\end{equation}
where the summands $B_{m-1}$  and $L(p,2,m)$ are  present only if $m\geq 2$ and if $m\geq 1$, respectively.  Distributive lattices form a minimal (nontrivial) lattice variety; see, for example, Gr\"atzer~\cite[Page 421]{ggfoundbook}.
Combining this fact with Lemma~\ref{lemmageneratVp}, we obtain that every lattice identity $\Phi$ satisfied by $\fvar p$ holds in all summands occurring in \eqref{eqdHdhBmT}. Thus, $\Phi$ holds in $L(p,2^m\cdot 3^n,\kappa)$ by Lemma~\ref{lemmagldsM}. Conversely, if $\Phi$ holds in $L(p,2^m\cdot 3^n,\kappa)$, then it holds in all  summands of \eqref{eqdHdhBmT}. Hence, applying Lemma~\ref{lemmageneratVp} to a summand distinct from $B_{m-1}$, we obtain that $\Phi$ holds in $\fvar p$. Thus, $L(p,2^m\cdot 3^n,\kappa)$ generates $\fvar p$. 


Clearly, $L(p,2^m\cdot 3^n,\kappa)$ has $2^\kappa$ many ideals, since Lemma~\ref{lemmaImtF} applies to at least one of the summands in \eqref{eqdHdhBmT}.
Observe that 
since each of the \emph{finitely many} summands in \eqref{eqdHdhBmT} satisfies the Descending Chain Condition, so does $L(p,2^m\cdot 3^n,\kappa)$. It is clear by \eqref{eqdmhmNVT} that $|L(p,2^m\cdot 3^n,\kappa)|=\kappa$. Thus, Lemma~\ref{lemmadccfp} yields that $L(p,2^m\cdot 3^n,\kappa)$ has exactly $\kappa$ many filters. 
Using that we already know that $|\Con(L(p,3,\kappa))|=3$, $|\Con(L(p,2,\kappa))|=2$, and $|\Con(B_{m-1})|=2^{m-1}$, Lemma~\ref{lemmagldsM} implies that $L(p,2^m\cdot 3^n,\kappa)$ has exactly $2^m\cdot 3^n$ many congruences. 
This  completes the proof of part \eqref{thmmainc} and that of Theorem~\ref{thmmain}.
\end{proof}


\end{document}